\numberwithin{equation}{section}
\newtheorem{theorem}{Theorem}[section]
\newtheorem{corollary}[theorem]{Corollary}
\newtheorem{question}[theorem]{Question}
\newtheorem{mthm}{Theorem}
\theoremstyle{definition}
\newtheorem{example}[theorem]{Example}
\newtheorem{definition}[theorem]{Definition}
\newtheorem{remark}[theorem]{Remark}
\newcommand{\sph}{\mathbf{S}}    
\newcommand{\sphm}{\mathbf{S}^m} 
\newcommand{\scal}{\operatorname{scal}} 
\newcommand{\scalM}{\operatorname{scal}_{g_M}} 
\newcommand{\supp}{\operatorname{supp}} 
\newcommand{\suppdPhi}{\operatorname{supp}({\rm d}\Phi)} 
\newcommand{\Af}{ \widehat{\textbf{A}} } 
\newcommand{\ch}{\textrm{ch}} 
\newcommand{\ind}{{\rm ind}} 
\newcommand{\sflow}{{\rm sf}} 
\newcommand{\indrel}{\textrm{ind-rel}} 
\newcommand{\sfrel}{\textrm{sf-rel}} 
\newcommand{\D}{\textrm{d}} 
\begin{document}

\title[Bottom spectrum and Llarull's theorem]{Bottom spectrum and Llarull's theorem on complete noncompact manifolds}

\author[Daoqiang Liu]{Daoqiang Liu}

\address{Chern Institute of Mathematics \& LPMC, Nankai University, Tianjin 300071, China}
\email{\href{mailto:dqliu@nankai.edu.cn}{dqliu@nankai.edu.cn}}
\urladdr{\href{https://www.dqliu.cn}{www.dqliu.cn}}

\subjclass[2020]{Primary 53C21, 53C27; Secondary 53C23, 58J30, 58J32}

\keywords{Callias operator, nonnegative scalar curvature, bottom spectrum}

\begin{abstract}
In this paper, we prove an extension of the noncompact version of Llarull's theorem due to Zhang and Li-Su-Wang-Zhang, 
giving an upper bound for the infimum of scalar curvature in terms of the bottom spectrum of the Laplacian.
Moreover, we extend the theorem to manifolds with boundary, relaxing the strict positivity condition on the scalar curvature near the boundary that was required by Liu-Liu.
Our approach is based on deformed Dirac operators.
\end{abstract}

\maketitle

\setcounter{section}{-1} 


\section{Introduction}\label{sec:intro}
It is well-known that starting with the famous Lichnerowicz's vanishing theorem \cite{Lic63}, Dirac operators have played important roles in the study of positive scalar curvature on spin manifolds (cf.~\cite{GL83}).
A notable example is the sphere rigidity theorem due to Llarull \cite{Ll98} which states that for an $m$-dimensional closed Riemannian spin manifold $(M,g_M)$ with $\scal_{g_M}\geq m(m-1)$, any smooth area decreasing map $\Phi:M\to \sphm$ \footnote{We denote by $\sphm$ the standard unit $m$-sphere carrying its canonical metric $g_{\sphm}$.} of nonzero degree is a Riemannian isometry.
There are various works on Llarull's theorem: \cite{CWXZ24+}, \cite{GS02}, \cite{Lot21}, \cite{Su19}, \cite{SWZ22}, \cite{LSW24}, \cite{BZ25+}.
Here the list is by no means exhaustive.
Zhang \cite{Zh20} and Li-Su-Wang-Zhang \cite{LSWZ24+} provide a complete answer to a question posed by Gromov (in an earlier version of \cite{Gro23}) regarding the noncompact extension of Llarull's theorem \cite{Ll98}, employing deformed Dirac operators and the spectral flow of a family of deformed Dirac operators, respectively.
To be precise,
\begin{theorem}[{\cite{Zh20}, \cite{LSWZ24+}}]\label{thm:complete_noncompact_Llarull}
Let $(M,g_M)$ be an $m$-dimensional complete noncompact Riemannian spin manifold. Let $\Phi: M\to \sphm$ be a smooth area decreasing map which is locally constant at infinity and of nonzero degree. Assume that $\scal_{g_M}\geq m(m-1)$ on $\suppdPhi$. Then
\[
\inf_M \scal_{g_M} <0.
\]
\end{theorem}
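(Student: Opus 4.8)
Following Zhang, the plan is to run a Lichnerowicz--Weitzenb\"ock argument on a Callias-type deformation of a twisted Dirac operator, in tandem with Llarull's pointwise curvature estimate. I argue by contradiction: suppose $\scalM \ge 0$ on all of $M$, and assume for concreteness that $m$ is even (the odd case needs only routine modifications). Let $\mathcal{S} \to \sphm$ be the spinor bundle with its grading $\mathcal{S} = \mathcal{S}^+ \oplus \mathcal{S}^-$ and Levi--Civita connection, and let $D_\Phi$ be the Dirac operator of $M$ twisted by $\Phi^*\mathcal{S}$, acting on the graded bundle $E = S_M \,\widehat{\otimes}\, \Phi^*\mathcal{S}$; it is formally self-adjoint and odd. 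Viewing $\sphm \subset \mathbb{R}^{m+1}$ and writing $\Phi = (\Phi_1,\dots,\Phi_{m+1})$, set $\Phi_0 := \sum_{a} \Phi_a\,\widehat{c}(\epsilon_a) \in \operatorname{End}(E)$, where $\epsilon_1,\dots,\epsilon_{m+1}$ is the standard basis of $\mathbb{R}^{m+1}$ acting by Clifford multiplication on the $\Phi^*\mathcal{S}$-factor; after a harmless normalization $\Phi_0$ is self-adjoint and odd. The two algebraic facts that drive the argument are: $\Phi_0^2 = |\Phi|^2 = 1$ on \emph{all} of $M$ (this uses crucially that $\Phi$ takes values in the \emph{unit} sphere), and $\{D_\Phi,\Phi_0\}$ is a bounded bundle endomorphism that vanishes wherever $\D\Phi = 0$, hence is supported on the compact set $\suppdPhi$.

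For $t > 0$ put $B_t := D_\Phi + t\,\Phi_0$. Outside $\suppdPhi$ we have $\{D_\Phi,\Phi_0\} = 0$, so there $B_t^2 = D_\Phi^2 + t^2 \ge t^2 > 0$; hence $B_t$ is invertible outside a compact set and so is Fredholm on $L^2$, with $\ind(B_t) := \dim\ker B_t^+ - \dim\ker B_t^-$ independent of $t > 0$. The Weitzenb\"ock formula gives
\[
B_t^2 \;=\; \nabla^*\nabla \;+\; \tfrac14\,\scalM \;+\; \mathcal{R}_\Phi \;+\; t\,\{D_\Phi,\Phi_0\} \;+\; t^2 ,
\]
where $\mathcal{R}_\Phi$ is the twisting curvature of $\Phi^*\mathcal{S}$. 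Because the curvature operator of $\sphm$ equals the identity, Llarull's pointwise computation expresses $\mathcal{R}_\Phi$ through the singular values $\lambda_1,\dots,\lambda_m$ of $\D\Phi$ and yields $\mathcal{R}_\Phi \ge -\tfrac14\sum_{i\ne j}\lambda_i\lambda_j \ge -\tfrac{m(m-1)}{4}$ at every point, the last step using that $\Phi$ is area decreasing. Thus on $\suppdPhi$ one has $\tfrac14\scalM + \mathcal{R}_\Phi \ge \tfrac{m(m-1)}{4}-\tfrac{m(m-1)}{4} = 0$, while off $\suppdPhi$ one has $\mathcal{R}_\Phi = 0$, $\{D_\Phi,\Phi_0\} = 0$ and $\scalM \ge 0$. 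Choosing $t > 2\sup_{\suppdPhi}\|\{D_\Phi,\Phi_0\}\|$, the zeroth-order part of $B_t^2$ is $\ge \tfrac12 t^2$ on the compact set $\suppdPhi$ and $\ge t^2$ off it, so $B_t^2 \ge \nabla^*\nabla + \tfrac12 t^2$ on all of $M$. As $M$ is complete, integration by parts (via a cutoff exhaustion) is legitimate, and $B_t u = 0$ with $u \in L^2$ forces $0 = \|B_t u\|_{L^2}^2 = \|\nabla u\|_{L^2}^2 + \langle (B_t^2 - \nabla^*\nabla)u, u\rangle_{L^2} \ge \tfrac12 t^2\|u\|_{L^2}^2$, hence $u = 0$. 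So $\ker B_t = 0$ and therefore $\ind(B_t) = 0$.

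On the other hand, the Callias index theorem computes $\ind(B_t)$ topologically: since $\Phi$ is locally constant at infinity, the index density $\Af(TM)\wedge\Phi^*\ch(\mathcal{S}^+ - \mathcal{S}^-)$ is compactly supported, and
\[
\ind(B_t) \;=\; \int_M \Af(TM)\wedge\Phi^*\ch(\mathcal{S}^+ - \mathcal{S}^-) \;=\; \deg(\Phi)\cdot\chi(\sphm) \;=\; \pm\,2\deg(\Phi) ,
\]
which is nonzero because $\deg(\Phi) \ne 0$. This contradicts $\ind(B_t) = 0$; hence $\scalM \ge 0$ cannot hold on all of $M$, that is, $\inf_M \scalM < 0$.

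The \emph{main obstacle} is this last step, carried out over the noncompact manifold $M$: one needs that $B_t$ is Fredholm there and that its index equals the nonzero quantity $\pm 2\deg(\Phi)$. Fredholmness follows from the invertibility of $B_t$ at infinity noted above, which in turn relies on $\{D_\Phi,\Phi_0\}$ being supported in the compact set $\suppdPhi$. The index identification rests on the Callias index theorem together with the compact support of the index density, which is exactly the point where the hypothesis that $\Phi$ is locally constant at infinity is used (it makes the contribution at infinity vanish and reduces the computation to the closed-manifold formula $\deg(\Phi)\cdot\chi(\sphm)$). By contrast, the Weitzenb\"ock positivity is elementary once Llarull's estimate is in hand: the term $t^2$ produced by $\Phi_0^2 = 1$, available precisely because the target is the \emph{unit} sphere, dominates on the \emph{compact} set $\suppdPhi$ the first-order-in-$t$ cross term $t\{D_\Phi,\Phi_0\}$, whereas off $\suppdPhi$ there is no cross term at all and only $\scalM \ge 0$ is required.
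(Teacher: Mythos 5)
The central construction in your proposal breaks down. You set $\Phi_0 = \sum_a \Phi_a\,\widehat{c}(\epsilon_a) = \widehat{c}(\Phi)$, i.e.\ Clifford multiplication by the position vector $\Phi(p)\in\mathbf{S}^m\subset\mathbf{R}^{m+1}$. But $\Phi(p)$ is precisely the outward unit \emph{normal} to $T_{\Phi(p)}\mathbf{S}^m$, and under the standard identification of $\mathcal{S}(\mathbf{S}^m)$ with the restricted spinor bundle of $\mathbf{R}^{m+1}$ the operator $\widehat{c}(\nu)$ (Clifford multiplication by the unit normal) is, up to a fourth root of unity, exactly the $\mathbf{Z}_2$-grading operator $\omega_{\mathbf{C}}$ of $\mathcal{S}(\mathbf{S}^m) = \mathcal{S}^+\oplus\mathcal{S}^-$. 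Thus your $\Phi_0$, ``after a harmless normalization,'' is the pulled-back grading operator of $\Phi^*\mathcal{S}$: it is \emph{even}, parallel, and graded-\emph{commutes} with the Clifford action $c(e^i)\,\widehat{\otimes}\,1$ of $TM$. This has two fatal consequences. First, $\{D_\Phi,\Phi_0\} = 2\Phi_0 D_\Phi$ is a genuine first-order operator (not a bounded bundle endomorphism) and it does not vanish off $\supp(\mathrm{d}\Phi)$, so the Weitzenb\"ock identity you wrote for $B_t^2$ is incorrect and the positivity argument collapses. Second, since $D_\Phi$ is odd and $\Phi_0$ is even, $B_t=D_\Phi+t\Phi_0$ is neither even nor odd; it does not restrict to a map $E^+\to E^-$, so the superindex $\ind(B_t)=\dim\ker B_t^+-\dim\ker B_t^-$ is not defined in the way your Callias-index step requires. (A quick sanity check: $B_t$ is formally self-adjoint, so any honest Fredholm index of it as an ungraded operator is zero anyway.)

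A valid Callias potential must be \emph{odd}. The route the paper (following Zhang and Cecchini--Zeidler) takes is to form a Gromov--Lawson pair $(\mathcal{E},\mathcal{F})=(\Phi^*\mathcal{S}^+,\Psi^*\mathcal{S}^+)$, where $\Psi$ is a locally constant modification agreeing with $\Phi$ outside $K$; the bundle $\mathcal{E}\oplus\mathcal{F}^{\mathrm{op}}$ then carries an odd, parallel swap $\sigma$, and the Callias potential is $f\sigma$ with $f=\varepsilon\psi$ a \emph{small} cut-off supported away from $K$. The anticommutator $\{\mathcal{D},f\sigma\}=c(\mathrm{d}f)\sigma$ is genuinely $0$th-order and supported where $\psi$ varies, i.e.\ in the annulus $\overline{\mathcal{U}}\setminus U$, where one has a positive lower bound $\scal_0>0$ on $\scalM$ to absorb it for $\varepsilon$ small. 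Note this is the opposite parameter regime from your large-$t$ deformation: the smallness of $\varepsilon$ (not a large coupling) is what makes the cross-term controllable, because that term lives in a region where the scalar curvature is strictly positive, not where the Llarull bound is saturated. The index/spectral-flow nonvanishing then comes from $\indrel(M;\mathcal{E},\mathcal{F})$ (even $m$) or $\sfrel(M;\mathcal{E},\mathcal{F},\rho_M)$ (odd $m$), not from a closed-manifold formula $\deg(\Phi)\cdot\chi(\mathbf{S}^m)$ applied across the cylinder. Your overall strategy (Callias operator + Weitzenb\"ock + Llarull estimate + nonzero topological index) is the right one, but the specific choice $\Phi_0=\widehat{c}(\Phi)$ cannot serve as the potential.
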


Recall that a smooth map $\Phi: M\to \sphm$ is called \textit{area decreasing} if the area contraction constant of $\Phi$ at each point $p\in M$ satisfies
\begin{equation}\label{defn:contraction_constant}
    \mathbf{a}(p):=\sup_{\eta \in \Lambda^2 T_p M\setminus \{0\} } \dfrac{| \Lambda^2 {\rm d}_p \Phi(\eta)|_{g_{\sphm}}}{|\eta|_{g_M}} \leq 1,
\end{equation}
where $\Lambda^2{\rm d}_p \Phi: \Lambda^2 T_p M\to \Lambda^2  T_{\Phi(p)}\sphm$ is the induced map of $\Phi$ on $2$-vectors at $p$.
We say that $\Phi$ is locally constant at infinity if it is locally constant outside a compact subset of $M$. Moreover, ${\rm d}\Phi$ is the differential of $\Phi$ and the support of ${\rm d}\Phi$ is defined to be $\supp({\rm d}\Phi):=\overline{\{p\in M\colon {\rm d}_p\Phi\neq 0\}}$.
Theorem~\ref{thm:complete_noncompact_Llarull} reveals a rigorous correspondence between the existence of such area decreasing maps and an obstruction to admitting a metric of nonnegative scalar curvature.

A quantitative version of Theorem~\ref{thm:complete_noncompact_Llarull} has been proved by Shi \cite{Shi24+} \cite{Shi25+} using similar arguments from Theorem~3.1 in Zeidler's paper \cite{Zei20}.
Consistent with other sharp geometric inequalities, it is natural to investigate whether $\inf_M \scalM<0$ in Theorem~\ref{thm:complete_noncompact_Llarull} can be further strengthened.
\begin{question}\label{ques:NSC_noncompact_Llarull}
    Let $(M,g_M)$ be an $m$-dimensional complete noncompact Riemannian spin manifold. Let $\Phi: M\to \sphm$ be a smooth area decreasing map which is locally constant at infinity and of nonzero degree. Assume that $\scal_{g_M}\geq m(m-1)$ on $\suppdPhi$. 
    Does there exist a nonnegative global geometric invariant $\mathcal{I}(g_M)$ on $M$, depending only on the metric $g_M$, such that
    \[
        \inf_M \scal_{g_M} < -\mathcal{I}(g_M) ?
    \]
\end{question}   

The main purpose of this paper is to provide a partial affirmative answer to Question~\ref{ques:NSC_noncompact_Llarull}.
Recall that the bottom spectrum $\lambda_1(M,g_M)$ of the Laplacian on a complete manifold $(M,g_M)$ is characterized by
\begin{equation}\label{defn:bottom_spectrum}
    \lambda_1(M,g_M)=\inf_{u \in C_c^{\infty}(M)\setminus \{0\} } \dfrac{\int_M |{\rm d} u|^2_{g_M} dV_{g_M}}{\int_M u^2 dV_{g_M}},
\end{equation}
where $C_c^{\infty}(M)$ is the space of smooth functions on $M$ with compact support.

Our first main theorem can be stated as follows.

\begin{mthm}\label{mthm:quantitative_Llarull_theorem}
    Let $(M,g_M)$ be an $m$-dimensional complete noncompact Riemannian spin manifold. 
    Let $\Phi: M\to \sphm$ be a smooth area decreasing map which is locally constant at infinity and of nonzero degree. Assume that $\scal_{g_M}\geq m(m-1)$ on $\suppdPhi$. If \(c > \frac{m-1}{4m}\), then
    \[
          \inf_M \scalM < - \frac{1}{c} \lambda_1(M,g_{M}).
    \]
\end{mthm}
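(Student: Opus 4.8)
The plan is to combine the deformed Dirac operator argument behind Theorem~\ref{thm:complete_noncompact_Llarull} with the refined Kato inequality for harmonic spinors: the improvement factor $\tfrac{m-1}{m}$ in the latter is exactly what produces the threshold $c>\tfrac{m-1}{4m}$. First I would dispose of the case $\lambda_1(M,g_M)=0$, where the asserted inequality is just $\inf_M\scalM<0$, i.e.\ the conclusion of Theorem~\ref{thm:complete_noncompact_Llarull} under the present hypotheses. So assume $\lambda_1:=\lambda_1(M,g_M)>0$; since $c>\tfrac{m-1}{4m}$ is equivalent to $\tfrac1c<\tfrac{4m}{m-1}$ and $\lambda_1>0$, it is enough to prove the metric inequality
\[
\inf_M\scalM\;\le\;-\,\frac{4m}{m-1}\,\lambda_1 ,
\]
for then $\inf_M\scalM\le-\tfrac{4m}{m-1}\lambda_1<-\tfrac1c\lambda_1$. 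We may also assume $\inf_M\scalM>-\infty$ (otherwise there is nothing to prove). Let $E=\Phi^{*}\Sigma(T\sphm)$ be the pullback of the spinor bundle of $\sphm$ with its pulled-back connection, and let $D^{E}$ be the Dirac operator of $(M,g_M)$ twisted by $E$ (after the standard modification when $m$ is odd); since $\Phi$ is locally constant at infinity, $(E,\nabla^{E})$ is flat outside a compact set. The twisted Lichnerowicz formula reads $(D^{E})^{*}D^{E}=\nabla^{*}\nabla+\tfrac14\scalM+\mathcal{R}^{E}$, and Llarull's pointwise estimate---using that $\Phi$ is area decreasing and that $\sphm$ has constant curvature---gives $\langle\mathcal{R}^{E}\psi,\psi\rangle\ge-\tfrac14 m(m-1)\,\mathbf{a}(p)\,|\psi|^{2}$ on $\suppdPhi$ and $\mathcal{R}^{E}=0$ off $\suppdPhi$. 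With $\scalM\ge m(m-1)$ on $\suppdPhi$ and $\mathbf{a}\le 1$, the zeroth-order term $W:=\tfrac14\scalM+\mathcal{R}^{E}$ then satisfies $W\ge 0$ on $\suppdPhi$ and $W=\tfrac14\scalM$ on $M\setminus\suppdPhi$.

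Next I would use the deformed Dirac operator methods of \cite{Zh20} and \cite{LSWZ24+}, in which $\deg\Phi\neq0$ forces a nonzero (relative) index, to produce a nonzero $\sigma\in L^{2}(\Sigma\otimes E)$ with $D^{E}\sigma=0$. Since $M$ is complete and $W$ is bounded below, $\nabla\sigma\in L^{2}$ and integrating the Lichnerowicz formula against $\sigma$ over $M$ has no boundary term, so $0=\|\nabla\sigma\|_{L^{2}}^{2}+\int_{M}\langle W\sigma,\sigma\rangle$; using $W\ge 0$ on $\suppdPhi$ and $W=\tfrac14\scalM\ge\tfrac14\inf_M\scalM$ off $\suppdPhi$ gives
\[
\|\nabla\sigma\|_{L^{2}}^{2}\;\le\;-\frac14\int_{M\setminus\suppdPhi}\scalM\,|\sigma|^{2}\;\le\;-\frac14\bigl(\inf_M\scalM\bigr)\|\sigma\|_{L^{2}}^{2}.
\]
Now I would invoke the refined Kato inequality for harmonic spinors, $|{\rm d}|\sigma||^{2}\le\tfrac{m-1}{m}\,|\nabla\sigma|^{2}$ almost everywhere, which remains valid in the twisted setting ($E$ being a spectator in the Clifford/twistor splitting of $T^{*}M\otimes\Sigma\otimes E$) and holds across the zero set of $\sigma$; this yields $\int_{M}|{\rm d}|\sigma||^{2}\le\tfrac{m-1}{4m}\bigl(-\inf_M\scalM\bigr)\|\sigma\|_{L^{2}}^{2}$. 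Finally, since $M$ is complete, $C_c^{\infty}(M)$ is dense in $W^{1,2}(M)$, so the characterization \eqref{defn:bottom_spectrum} extends to $W^{1,2}(M)$ and, with $0\neq|\sigma|\in W^{1,2}(M)$, gives $\lambda_1\|\sigma\|_{L^{2}}^{2}\le\int_{M}|{\rm d}|\sigma||^{2}$. Combining the last two displays and cancelling $\|\sigma\|_{L^{2}}^{2}>0$ yields $\lambda_1\le\tfrac{m-1}{4m}\bigl(-\inf_M\scalM\bigr)$, i.e.\ the displayed metric inequality, hence the theorem.

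The genuinely new ingredient is the refined Kato step, which is exactly what turns the Bochner coefficient $\tfrac14$ into the sharp $\tfrac{m-1}{4m}$. The main obstacle, I expect, is the preceding step: on a noncompact $M$ with no curvature control at infinity, $D^{E}$ need not be Fredholm, so one must extract from the constructions of \cite{Zh20},\cite{LSWZ24+} an honest $L^{2}$ element of $\ker D^{E}$---rather than of a deformed operator---for the refined Kato inequality to apply. I would expect this to be handled by exhausting $M$ by large geodesic balls with suitable (e.g.\ Atiyah--Patodi--Singer or chiral) boundary conditions and passing to a nontrivial limit with the help of the Bochner estimate above, or by arranging the deformation to vanish on a neighbourhood of $\suppdPhi$; keeping the limiting section nontrivial is the delicate point.
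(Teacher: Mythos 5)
Your proposal correctly identifies the two essential ingredients — the index/spectral-flow nonvanishing from the noncompact Llarull setup, and a refined Kato-type improvement that turns the Bochner coefficient $\tfrac14$ into $\tfrac{m-1}{4m}$ — but it differs from the paper's proof in a way that leaves a genuine gap, which you yourself flag as ``the delicate point'' but do not resolve. You want a nonzero $\sigma\in L^{2}$ with $D^{E}\sigma=0$, because that is exactly the hypothesis under which the clean refined Kato inequality $|\mathrm{d}|\sigma||^{2}\le\tfrac{m-1}{m}|\nabla\sigma|^{2}$ is valid (it comes from the twistor decomposition together with $D^{E}\sigma=0$). But the index/spectral-flow machinery of \cite{Zh20}, \cite{LSWZ24+} produces an element of $\ker\mathcal{B}_{f}=\ker(\mathcal{D}+f\sigma)$, not of $\ker\mathcal{D}$. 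For such $u$ one has $\mathcal{D}u=-f\sigma u\neq 0$ wherever $f\neq 0$, so the Penrose identity gives $|\nabla u|^{2}=|\mathcal{P}u|^{2}+\tfrac{1}{m}f^{2}|u|^{2}$ and the endpoint Kato constant $\tfrac{m-1}{m}$ does not apply. Your two proposed remedies do not address this: the admissible potential $f$ already vanishes on $\suppdPhi$, and it must be a nonzero constant at infinity for $\mathcal{B}_f$ to be Fredholm (the Callias setup demands this), so one cannot simply make $f\equiv 0$; and an exhaustion/limiting argument as $\varepsilon\to 0$ would have to preserve nontriviality of the limit without any Fredholm property for $\mathcal{D}$ itself, which you do not establish.

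The paper sidesteps this entirely by never producing a harmonic spinor. It works directly with $u\in\ker\mathcal{B}_{f}(t)$ and runs a Callias-adapted Kato argument (following the HKKZ decomposition): it sets $v_i=c(e^{i})\nabla_{e_i}u+\tfrac{f}{m}\sigma u$, uses $\sum_i v_i=0$ to get $|\mathcal{P}u|^{2}\ge\tfrac{m}{m-1}|v_1|^{2}$, and then completes the square to separate a $\tfrac{1}{4c}|\mathrm{d}|u||^{2}$ piece from a compensating $\alpha_1 f^{2}|u|^{2}$ piece. The requirement $\alpha=\tfrac{m}{m-1}-\tfrac{1}{4c}>0$ — i.e.\ exactly $c>\tfrac{m-1}{4m}$ — is what makes this completion of squares work and makes the residual $f$-dependent coefficient $\alpha_2$ positive. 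In other words, the strict inequality in the threshold is not an artifact that one expects to remove by a limiting argument; it is the price paid for the nonvanishing of $\mathcal{D}u=-f\sigma u$, which your approach would need to circumvent. Moreover, your claimed intermediate step $\inf_M\scalM\le-\tfrac{4m}{m-1}\lambda_1$ (the endpoint $c=\tfrac{m-1}{4m}$) is a strictly stronger statement than the theorem, so either a new idea is needed to supply the $L^2$ harmonic spinor, or the endpoint is simply out of reach by this route. As it stands, this step is the missing piece of your argument.

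Two smaller remarks: (i) the reduction to the odd-dimensional case via spectral flow and to the even case via a single Callias index, which the paper carries out carefully and separately, is elided in your sketch but is necessary (your $D^{E}$ ``after the standard modification when $m$ is odd'' glosses over the entire family $\mathcal{B}_{f}(t)$ apparatus); (ii) the paper's endgame is a contradiction using unique continuation, forcing $u\equiv 0$ on a connected $M$, rather than a direct comparison of $\lambda_1$ with $\inf\scal$ — your direct route would also need to justify that $|\sigma|\in W^{1,2}(M)$ is admissible in the Rayleigh quotient (this is fine on a complete manifold, but should be said).
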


The following example shows that Theorem~\ref{mthm:quantitative_Llarull_theorem} is nontrivial.
\begin{example}\label{example:Llarull_theorem}
Let $N=T^2 \times \mathbf{R}$ carrying the warped product metric $g_N=\cosh^2(t) g_{T^2} + dt^2$, where $g_{T^2}$ is a flat torus metric (cf.~\cite{MWang24}*{Example~1.5}). The scalar curvature of $g_N$ satisfies $\scal_{g_N}= -6 + 2\cosh^{-2}(t)$ and the bottom spectrum $\lambda_1(N, g_N)=1$. 
Let $\sph^1$ be a standard circle and $p\in \sph^1$ a base-point. Choose a smooth map $\varphi:\mathbf{R}\to \sph^1$ of degree one such that $\mathbf{R}\setminus (-1,1)$ to the base-point $p\in \sph^1$.
For any $\delta>0$, we can find a finite covering $\widehat{T}^2 \to T^2$ together with a $\delta$-Lipschitz map $h: \widehat{T}^2 \to \sph^2$ of nonzero degree.
Let $M= \widehat{T}^2 \times \mathbf{R}$ with the lifted metric $g_M:= \cosh^2(t) g_{\widehat{T}^2} + dt^2$. 
Clearly, again $\scalM=-6+2\cosh^{-2}(t)$ and $\lambda_1(M, g_M)=1$. 
We consider the chain of maps
\[
    M= \widehat{T}^2 \times \mathbf{R} \overset{h\times \varphi}{\to} \sph^2 \times \sph^1 \overset{\phi}{\to} \sph^3,
\]
where $\phi$ is a smooth map of degree one which factors through the smash product $\sph^2 \wedge \sph^1$.
Let $\Phi=\phi \circ (h\times \varphi)$ denote the composition. Then $\Phi: (M, g_M) \to \sph^3$ is a smooth area $(\delta\cdot C)$-decreasing map for some constant $C>0$ with $\suppdPhi\subset K:=\widehat{T}^2\times [-1,1]$ and $\deg(\Phi)=\deg(h)\neq 0$.
Let $f: M\to [-5,0]$ be a smooth function such that $f(x,t)=-\frac{5}{4} t^2$ for $(x,t)\in \widehat{T}^2\times [-2,2]$ and $f=0$ on $M\setminus \big( \widehat{T}^2\times [-t_0,t_0] \big)$, where $t_0>2$. 
Consider the rescaled metric $g=e^{2f}g_M$. 
By choosing $\delta$ sufficiently small, we can arrange that $\Phi:(M,g)\to \sph^3$ is a smooth area decreasing map of nonzero degree with $\suppdPhi\subset K$. 
Moreover, 
\[
  \begin{aligned}
      \scal_g  =& e^{-2f} (\scalM - 4 \Delta_{g_M} f - 2|\nabla f|_{g_M}^2) \\
               =& e^{\frac{5}{2} t^2} \Big( 4+2\cosh^{-2}(t)+20t \tanh(t)-\frac{25}{2}t^2 \Big) \\
            \geq & 6 \quad \text{on} \ \ K,
  \end{aligned}
\]
$\scal_g=-6 + 2\cosh^{-2}(t)$ on $M\setminus \big( \widehat{T}^2\times [-t_0,t_0] \big)$, and
\[
\begin{aligned}
    \lambda_1(M,g) = & \inf_{ u \in C_c^{\infty}(M)\setminus \{0\}  } \frac{\int_M |{\rm d} u|_g^2 dV_g}{\int_M u^2 dV_g} \\
                   = & \inf_{ u \in C_c^{\infty}(M)\setminus \{0\} } \frac{\int_M e^f |{\rm d} u|_{g_M}^2 dV_{g_M}}{\int_M e^{3f} u^2 dV_{g_M}} \\
                \geq &  e^{-5} \inf_{ u \in C_c^{\infty}(M)\setminus \{0\} } \frac{\int_M |{\rm d} u|_{g_M}^2 dV_{g_M}}{\int_M u^2 dV_{g_M}} \\
                   = &   e^{-5} \lambda_1(M,g_M) = e^{-5}>0.
\end{aligned}
\]
The construction in other dimensions proceeds similarly.

As an immediate consequence of Theorem~\ref{mthm:quantitative_Llarull_theorem}, we obtain the bottom spectrum estimate under a scalar curvature lower bound.
\end{example}
\begin{corollary}
    Let $(M,g_M)$ be an $m$-dimensional complete noncompact Riemannian spin manifold such that $\scalM\geq \kappa$ for some constant $\kappa < 0$. 
    Let $\Phi: M\to \sphm$ be a smooth area decreasing map which is locally constant at infinity and of nonzero degree. Assume that $\scal_{g_M}\geq m(m-1)$ on $\suppdPhi$. If \(c > \frac{m-1}{4m}\), then
    \[
        \lambda_1(M,g_M) < - c\kappa.
    \]
\end{corollary}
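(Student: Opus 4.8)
The plan is to deduce the corollary directly from Theorem~\ref{mthm:quantitative_Llarull_theorem}, with no additional analysis required. First I would observe that the hypotheses imposed on $(M,g_M)$ and on $\Phi$ in the corollary are exactly those of Theorem~\ref{mthm:quantitative_Llarull_theorem}, supplemented only by the uniform lower bound $\scalM \geq \kappa$. Therefore Theorem~\ref{mthm:quantitative_Llarull_theorem} applies verbatim and gives, for every $c > \frac{m-1}{4m}$,
\[
\inf_M \scalM < -\tfrac{1}{c}\,\lambda_1(M,g_M).
\]

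Next I would combine this strict inequality with the trivial estimate $\inf_M \scalM \geq \kappa$, which follows at once from the pointwise bound $\scalM \geq \kappa$; this yields $\kappa < -\tfrac{1}{c}\,\lambda_1(M,g_M)$. Since $c > \frac{m-1}{4m} \geq 0$, in particular $c > 0$, so multiplying both sides by the negative number $-c$ reverses the inequality and produces $\lambda_1(M,g_M) < -c\kappa$, as desired. One may further note that the conclusion is nonvacuous, since $\lambda_1(M,g_M) \geq 0$ by \eqref{defn:bottom_spectrum} while $-c\kappa > 0$ because $\kappa < 0$. I do not expect any obstacle here: the corollary is merely a rearrangement of Theorem~\ref{mthm:quantitative_Llarull_theorem} after inserting the uniform scalar curvature bound, and all of the substantive content has already been established in that theorem.
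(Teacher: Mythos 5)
Your proposal is correct and is essentially the same derivation the paper has in mind; indeed the paper introduces this corollary with the phrase ``As an immediate consequence of Theorem~\ref{mthm:quantitative_Llarull_theorem}'' and gives no separate proof, since the only work is combining $\inf_M \scalM < -\tfrac{1}{c}\lambda_1(M,g_M)$ with $\inf_M \scalM \geq \kappa$ and multiplying through by $-c<0$. Your final remark checking nonvacuity is a nice sanity check but not needed.
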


In addition, we establish a non-approximation interpretation of Theorem~\ref{thm:complete_noncompact_Llarull}.
\begin{corollary}\label{cor:non-approximation_Llarull}
Let $(M,g_M)$ be an $m$-dimensional complete noncompact Riemannian spin manifold with non-vanishing bottom spectrum. Let $\Phi: M \to \sphm$ be a smooth area decreasing map which is locally constant at infinity and of nonzero degree. Assume that $\scalM \geq m(m-1)$ on $\suppdPhi$. Then there exists a constant $C=C(g_M)>0$ such that $g_M$ admits no $C^0$-approximation by $C^2$-metrics $g$ on $M$ with $\scal_{g}\geq -C(g_M)$.
\end{corollary}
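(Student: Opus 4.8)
The plan is to deduce this from our quantitative inequality, Theorem~\ref{mthm:quantitative_Llarull_theorem}, together with the $C^0$-closedness of lower bounds on scalar curvature; the argument then amounts to pinning down a point at which $\scalM$ lies below a definite negative constant and quoting a rigidity theorem.

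First I would fix the constant and locate the point. Since $(M,g_M)$ has non-vanishing bottom spectrum we have $\lambda_1(M,g_M)>0$, and the hypotheses imposed in the corollary are precisely those of Theorem~\ref{mthm:quantitative_Llarull_theorem}, for which we may take $c=\frac12$ (note $\frac12>\frac{m-1}{4m}$ for every $m$). That theorem then gives $\inf_M\scalM<-2\lambda_1(M,g_M)$. I would set $C(g_M):=2\lambda_1(M,g_M)>0$, so that $\inf_M\scalM<-C(g_M)$ and hence there is a point $p_0\in M$ with $\scalM(p_0)<-C(g_M)$. (Any $C(g_M)\in\bigl(0,\frac{4m}{m-1}\lambda_1(M,g_M)\bigr)$ would serve equally well.)

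Next I would argue by contradiction. Suppose $g_M$ is a $C^0$-limit of $C^2$-metrics $g_k$ on $M$ with $\scal_{g_k}\geq -C(g_M)$. Since the limit metric $g_M$ is smooth, the $C^0$-rigidity of lower scalar curvature bounds (conjectured by Gromov and established, for smooth limits, by Bamler via Ricci flow, with later local refinements by Burkhardt-Guim and others) forces $\scalM\geq -C(g_M)$ on all of $M$, contradicting $\scalM(p_0)<-C(g_M)$. Hence $g_M$ admits no such $C^0$-approximation.

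I do not expect a genuine obstacle. The only non-elementary input is the $C^0$-rigidity theorem, and one just has to check that it applies: it is a local statement, so the noncompactness of $M$ is harmless, and it requires only that the limit metric be $C^2$, which holds as $g_M$ is smooth. The non-vanishing bottom spectrum is used solely to make the obstruction effective, via the explicit threshold $C(g_M)=2\lambda_1(M,g_M)$; qualitatively, Theorem~\ref{thm:complete_noncompact_Llarull} already gives $\inf_M\scalM<0$, hence the existence of some positive $C(g_M)$. Should the corollary instead be read with the approximating metrics required to be admissible for $\Phi$ in the sense of Theorem~\ref{mthm:quantitative_Llarull_theorem} ($\Phi$ area decreasing with respect to $g_k$ and $\scal_{g_k}\geq m(m-1)$ on $\suppdPhi$), then the rigidity theorem can be bypassed entirely: applying Theorem~\ref{mthm:quantitative_Llarull_theorem} to each $g_k$ (with $c=\frac13$, say) and using that $\lambda_1(M,g_k)\to\lambda_1(M,g_M)$ by a comparison of Rayleigh quotients under $C^0$-convergence forces $\inf_M\scal_{g_k}<-2\lambda_1(M,g_M)$ for $k$ large, once more contradicting $\scal_{g_k}\geq -C(g_M)$.
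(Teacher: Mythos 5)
Your primary argument is correct, but it takes a genuinely different and much heavier route than the paper. You invoke the $C^0$-rigidity of scalar curvature lower bounds (the Gromov conjecture, proved via Ricci flow by Bamler with local refinements by Burkhardt-Guim) as a black box: you apply Theorem~\ref{mthm:quantitative_Llarull_theorem} only to $g_M$ itself to produce a point $p_0$ with $\scal_{g_M}(p_0)<-C(g_M)$, and then the rigidity theorem forbids any $C^0$-approximating sequence with $\scal_{g_k}\geq-C(g_M)$. That is a valid proof, and it has the advantage of requiring nothing of the approximating metrics beyond the scalar curvature lower bound, which matches the literal wording of the corollary. But it brings in machinery that is entirely external to the paper.

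The paper instead does exactly what you sketch in your closing parenthetical: it applies Theorem~\ref{mthm:quantitative_Llarull_theorem} to the approximating metric $g$ itself, giving $\inf_M\scal_g<-\frac1c\lambda_1(M,g)$, and then combines this with the elementary fact that $g\mapsto\lambda_1(M,g)$ is $C^0$-continuous --- which the paper establishes by sandwiching Rayleigh quotients: if $a^2g_M\le g\le b^2g_M$ then $\frac{a^m}{b^{m+2}}\lambda_k(M,g_M)\le\lambda_k(M,g)\le\frac{b^m}{a^{m+2}}\lambda_k(M,g_M)$. Choosing $C(g_M)=\frac1c(\lambda_1(M,g_M)-\varepsilon)$ for $\varepsilon$ small then forces $\inf_M\scal_g<-C(g_M)$ for every $g$ in a $C^0$-neighborhood of $g_M$, so no such $g$ can satisfy $\scal_g\ge-C(g_M)$. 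No deep rigidity theorem is needed. You correctly observe that this route only applies Theorem~\ref{mthm:quantitative_Llarull_theorem} to those approximating metrics $g$ for which its hypotheses hold (spin, $\Phi$ area decreasing with respect to $g$, $\scal_g\ge m(m-1)$ on $\suppdPhi$); this is indeed an implicit assumption in the paper's proof. In short: your main proof trades the paper's elementary Rayleigh-quotient continuity argument for a hard external theorem in exchange for dropping that implicit assumption; your alternative at the end is the paper's actual proof.
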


By adapting the arguments employed in \cref{mthm:quantitative_Llarull_theorem}, we also establish Llarull's theorem on complete noncompact manifolds with boundary that drops the strict positivity condition on scalar curvature near the boundary, as required in Liu-Liu \cite{LL26+}.

\begin{mthm}\label{mthm:quantitative_Llarull_theorem_boundary}
    Let $(M,g_M)$ be an $m$-dimensional complete noncompact Riemannian spin manifold with compact mean-convex boundary. 
    Let $\Phi: M\to \sphm$ be a smooth area decreasing map which is locally constant at infinity and near the boundary and of nonzero degree. Assume that $\scal_{g_M}\geq m(m-1)$ on $\suppdPhi$. Then
    \[
          \inf_M \scalM < 0.
    \]
\end{mthm}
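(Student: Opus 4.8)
The plan is to mimic the Dirac-operator argument behind Theorem~\ref{thm:complete_noncompact_Llarull} (and Theorem~\ref{mthm:quantitative_Llarull_theorem}), but now on a manifold with boundary, using a boundary condition adapted to the mean-convexity hypothesis. Suppose for contradiction that $\scal_{g_M}\geq 0$ everywhere on $M$. Since $\Phi$ is locally constant at infinity and near the boundary, $\suppdPhi$ is a compact subset of the interior of $M$, and on it $\scal_{g_M}\geq m(m-1)>0$. Form the twisted Dirac operator $D^{\mathcal{E}}$ on $S_M\otimes \mathcal{E}$, where $\mathcal{E}=\Phi^*S_{\sphm}$ (for $m$ even) carries the pullback connection; as in Llarull's original computation the Lichnerowicz--Schrödinger--Weitzenböck formula gives
\[
    (D^{\mathcal{E}})^2 = \nabla^*\nabla + \frac{\scal_{g_M}}{4} + \mathcal{R}^{\mathcal{E}},
\]
where the curvature term $\mathcal{R}^{\mathcal{E}}$ is pointwise bounded below by $-\mathbf{a}(p)\,\frac{m(m-1)}{2}\geq -\frac{m(m-1)}{2}$ on $\suppdPhi$ and vanishes off $\suppdPhi$. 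Combining, the zeroth-order term of $(D^{\mathcal{E}})^2$ is $\geq 0$ on $M\setminus\suppdPhi$ (where $\scal_{g_M}\geq 0$) and $\geq \frac{m(m-1)}{4}-\frac{m(m-1)}{2}$ could be negative on $\suppdPhi$; so, exactly as in Llarull, one instead uses that $\mathbf{a}(p)\leq 1$ forces the curvature term $\geq -\frac{\scal_{g_{\sphm}}}{4}\mathbf{a}(p)\geq -\frac{m(m-1)}{4}$, whence the potential is $\geq \frac{1}{4}(\scal_{g_M}-m(m-1)\mathbf{a}(p))\geq 0$ on $\suppdPhi$ too, and strictly positive on a nonempty open set unless $\mathbf{a}\equiv 1$ and $\scal_{g_M}\equiv m(m-1)$ there.

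Next I would impose the Atiyah--Patodi--Singer type or, more conveniently, the local \emph{chirality}/\emph{mean-convex} boundary condition on $\partial M$: for a mean-convex boundary the relevant boundary operator contributes a boundary term $\int_{\partial M}\langle (\text{2nd fundamental form})\,\sigma,\sigma\rangle\geq 0$ in the integration-by-parts identity (this is the boundary analogue used in \cite{LL26+} and going back to Gromov--Lawson / the recent work on scalar curvature with boundary). Since $\Phi$ is locally constant near $\partial M$, the twisting bundle $\mathcal{E}$ is flat near the boundary, so the boundary condition is compatible with the product structure there and the usual Callias/APS machinery applies. With this boundary condition $D^{\mathcal{E}}$ is Fredholm: outside the compact set $\suppdPhi$ the operator $(D^{\mathcal{E}})^2\geq \nabla^*\nabla \geq 0$ with the nonnegative boundary term, so $D^{\mathcal{E}}$ is invertible at infinity, and its index equals (up to sign) the topological pairing $\int_M \widehat{A}(M)\wedge \ch(\mathcal{E})$, which by the standard Llarull computation is $\deg(\Phi)\cdot[\text{const}]\neq 0$. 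Hence $\ker D^{\mathcal{E}}\neq 0$.

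Now the integrated Weitzenböck identity, applied to a nontrivial harmonic spinor $\psi$ satisfying the boundary condition, reads
\[
    0=\int_M |\nabla\psi|^2 + \int_M \Big(\tfrac{\scal_{g_M}}{4}+\mathcal{R}^{\mathcal{E}}\Big)|\psi|^2 + \int_{\partial M}\langle \mathrm{II}\,\psi,\psi\rangle,
\]
and every term is $\geq 0$: the interior potential by the area-decreasing estimate above, the boundary term by mean-convexity. Forcing all terms to vanish gives $\nabla\psi\equiv 0$, so $|\psi|$ is constant and nonzero, and then the potential term forces $\mathbf{a}(p)\equiv 1$ and $\scal_{g_M}\equiv m(m-1)$ on $\suppdPhi$; but the curvature endomorphism $\mathcal{R}^{\mathcal{E}}$ actually has a one-dimensional kernel when $\mathbf{a}=1$, and a parallel spinor lying in that kernel over all of $\suppdPhi$ together with $\deg\Phi\neq 0$ forces $\suppdPhi=M$ and $\Phi$ a local isometry onto $\sphm$ — impossible since $M$ is noncompact. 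This contradiction shows $\scal_{g_M}$ must be negative somewhere, i.e. $\inf_M\scal_{g_M}<0$.

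The main obstacle is the analytic setup at infinity combined with the boundary: one must simultaneously run the Callias/coarse-index argument that gives nonvanishing index on the noncompact end \emph{and} the APS/chirality boundary theory on the compact boundary, checking that the chosen boundary condition is self-adjoint, elliptic, and compatible with invertibility outside a compact set (so that the relative index is well-defined and computes the expected degree). Relative to \cite{LL26+}, the new point — and the reason the strict-positivity hypothesis near $\partial M$ can be dropped — is that the positivity needed near the boundary is supplied by the \emph{geometric} mean-convexity boundary term rather than by a scalar-curvature bump; making this precise (in particular handling the rigidity/equality case carefully so that the flat collar near $\partial M$ does not obstruct the final contradiction) is the delicate part, and it is exactly here that the deformation/rescaling technique used in the proof of \cref{mthm:quantitative_Llarull_theorem} is adapted, reducing the problem to a neighborhood of $\suppdPhi$ where the strict positivity $\scal_{g_M}\geq m(m-1)$ is genuine.
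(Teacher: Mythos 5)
Your outline captures the right heuristics (Weitzenb\"ock, Llarull's curvature bound $\mathscr{R}^{\mathcal{E}}\ge -\tfrac{m(m-1)}{4}\mathbf a$, mean-convexity contributing a nonnegative boundary term), but there are several genuine gaps that the paper's proof has to — and does — close, and your sketch does not.

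First, \emph{Fredholmness at infinity}. You claim that outside $\suppdPhi$ one has $(D^{\mathcal{E}})^2\ge\nabla^*\nabla\ge 0$ and therefore $D^{\mathcal{E}}$ is ``invertible at infinity.'' Nonnegativity of $(D^{\mathcal E})^2$ does not give a spectral gap: under the contradiction hypothesis $\scalM\ge 0$, the scalar curvature is allowed to vanish identically on the whole noncompact end, where the twisting bundle is flat, so there is no reason for $D^{\mathcal E}$ to be Fredholm. This is exactly why the paper does not work with $D^{\mathcal E}$ alone but with the \emph{Callias operator} $\mathcal{B}_f=\mathcal{D}+f\sigma$ built from a Gromov--Lawson pair $(\mathcal{E},\mathcal{F})$ and an admissible potential $f=\varepsilon\psi$ that vanishes on $K=\suppdPhi$ and equals $\varepsilon>0$ near infinity; the term $\alpha_2 f^2|u|^2\ge\alpha_2\varepsilon^2|u|^2$ in the spectral estimate supplies the missing positivity on $M\setminus\overline{\mathcal U}$. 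You invoke the ``Callias/coarse-index argument'' by name but never construct $\sigma$, $f$, or the bundle $\mathcal F=\Psi^*\mathcal E_0$; without these the analytic step does not go through.

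Second, \emph{odd dimensions}. For $m$ odd the index of a Dirac operator is zero, and the paper replaces it by the spectral flow of the family $\mathcal{B}_{f,-1}(t)=(1-t)\mathcal{B}_{f,-1}+t\rho_M^{-1}\mathcal{B}_{f,-1}\rho_M$, computed via the relative spectral flow and the odd Chern character. Your proposal treats only the index $\int_M\widehat A\wedge\ch(\mathcal E)$, which only makes sense for $m$ even; the odd case is not addressed at all.

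Third, the \emph{mechanism of contradiction} is different from (and weaker than) the paper's. You try to run a Llarull-type rigidity argument: all terms vanish, so $\nabla\psi\equiv 0$, then $\mathbf a\equiv 1$ on $\suppdPhi$, and ``a parallel spinor in the kernel of $\mathscr R^{\mathcal E}$ forces $\suppdPhi=M$.'' That last implication is not justified; nothing prevents a parallel section on a manifold with $\suppdPhi\subsetneq M$. The paper avoids rigidity entirely: using the Kato/Penrose refinement it inserts the extra term $\tfrac{1}{4c}\|\mathrm d|u|\|^2_{L^2(M)}$, and the presence of the admissible potential $f$ makes the coefficient on $M\setminus\overline{\mathcal U}$ strictly positive even when $\scalM=0$ there. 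This forces $u=0$ on the open set $U_\delta\cup(M\setminus U)$, whence $|u|$ is constant (from the gradient term) and therefore $u\equiv 0$ on the connected manifold $M$ — a direct contradiction with $u\ne 0$, with no appeal to rigidity of $\Phi$. Relatedly, your boundary condition is only described loosely; the paper uses the specific local chirality condition $\chi=s\,c(\nu^*)\sigma$ with $s=-1$, whose boundary contribution is $-\bigl(\alpha_2 s f-\tfrac m2 H\bigr)|u|^2=(\alpha_2 f+\tfrac m2 H)|u|^2\ge 0$ precisely because $f\ge 0$ and $H\ge 0$; the involution $\sigma$ from the GL-pair is an essential ingredient of this boundary condition, not an optional add-on.
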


\textbf{Notation.}
Unless otherwise specified, throughout this article we assume that all manifolds are smooth, connected, oriented and of dimension no less than two.
Additionally, the notion of completeness refers to metric completeness when $M$ has nonempty boundary.

\textbf{Plan of the paper.}
In \cref{sec:Callias_operator_&_spectral_flow}, we introduce the notions and some necessary technical tools for deformed Dirac operators and spectral flow. 
\cref{sec:formula_in_codimension_zero} is devoted to the proof of the main theorems.


\section{Preliminaries}\label{sec:Callias_operator_&_spectral_flow}

This section reviews the fundamental properties of deformed Dirac operators (also called Callias operators).
Let $(M,g_M)$ be an $m$-dimensional complete Riemannian manifold, possibly noncompact and with compact boundary; write $M^{\circ}$ for its interior.

\subsection{Gromov-Lawson pairs and Callias operators}
Following \cite{GL83} (also \cite{CZ24}), a \textit{Gromov-Lawson pair} (GL‑pair) on $M$ consists of two Hermitian vector bundles $(\mathcal{E},\mathcal{F})$ equipped with metric connections, together with a parallel unitary isomorphism
\[
\mathcal{I} \colon \mathcal{E}|_{M \setminus K} \longrightarrow \mathcal{F}|_{M \setminus K},
\]
which extends to a smooth bundle map on a neighborhood of $\overline{M\setminus K}$,
for a compact subset $K \subset M^{\circ}$. The set $K$ is called the \textit{support} of the pair.

When $M$ is spin, every GL‑pair canonically determines a \textit{relative Dirac bundle} in the sense of \cite{CZ24}*{Definition 2.2}. Set
\[
S:=\slashed{S}_M \,\widehat{\otimes}\, (\mathcal{E}\oplus \mathcal{F}^{{\rm op}}),
\]
where $\slashed{S}_M$ is the complex spinor bundle of $M$, $\widehat{\otimes}$ denotes the graded tensor product of operators, and $\mathcal{E}\oplus \mathcal{F}^{{\rm op}}$ refers to the bundle $\mathcal{E}\oplus \mathcal{F}$ with $\mathbf{Z}_2$-grading such that $\mathcal{E}$ is considered even and $\mathcal{F}$ odd. 
Thus
\[
S=S^{+}\oplus S^{-},
\]
where $S^{+}=(\slashed{S}_M\otimes\mathcal{E})\oplus(\slashed{S}_M\otimes\mathcal{F})$ and $S^{-}=(\slashed{S}_M\otimes\mathcal{F})\oplus(\slashed{S}_M\otimes\mathcal{E})$.
Outside \(K\) one defines an odd, self‑adjoint, parallel bundle involution
\[
\sigma:=\operatorname{id}_{\slashed{S}_M}\,\widehat{\otimes}\,
\begin{pmatrix}
0 & \mathcal{I}^{*} \\
\mathcal{I} & 0
\end{pmatrix}
\;\colon\; S|_{M\setminus K}\longrightarrow S|_{M\setminus K}.
\]
Let $\mathcal{D}$ be the Dirac operator on $S$ (see \cite{CZ24}*{Example 2.5}).
A Lipschitz function $f: M\to \mathbf{R}$ is called an \textit{admissible potential} if $f=0$ on $K$ and there exists a compact set $K\subseteq L\subseteq M$ such that $f$ is equal to a nonzero constant on each component of $M\setminus L$ \cite{CZ24}*{Definition 3.1}.
For such \(f\), the product \(f\sigma\) extends by zero to a continuous bundle map on all of \(M\).
\begin{definition}
The \textit{Callias operator} on $S$ associated to the above data is given by
\[
 \mathcal{B}_{f} := \mathcal{D} + f \sigma.
\]  
\end{definition}

For the analysis of Callias operators on a manifold $M$ with compact boundary, one must impose appropriate local boundary conditions. The relevant notion is that of a boundary chirality.
\begin{definition}[\cite{CZ24}]
Let $S$ be a relative Dirac bundle and let $s\colon \partial M\to \{\pm 1\}$ be a locally constant function. The \textit{boundary chirality} on $S$ associated to the choice of signs $s$ is the endomorphism
\[
     \chi := s\, c(\nu^*)\sigma \colon \left. S\right|_{\partial M} \to \left. S\right|_{\partial M},
\]
where $\nu^*$ is the dual covector of the outward unit normal $\nu$ to $\partial M$.
\end{definition}

The map $\chi$ is a self‑adjoint, even involution; it anti‑commutes with \(c(\nu^*)\) and commutes with $c(w^*)$ for all $w\in T(\partial M)$.
This leads to the following boundary condition.
\begin{definition}[\cite{CZ24}]
A section $u\in C^{\infty}(M,S)$ satisfies the \textit{local boundary condition} if
\begin{equation}\label{eq:local_boundary_condition}
   \chi \left(u|_{\partial M}\right) = u|_{\partial M}. 
\end{equation}
\end{definition}
For a choice of $s:\partial M\to \{\pm 1\}$, denote by \(\mathcal{B}_{f,s}\) the restriction of \(\mathcal{B}_{f}\) to the domain
\[
    {\rm dom}(\mathcal{B}_{f,s}):=\{ u\in C_c^{\infty}(M,S)\colon \chi(u|_{\partial M})=u|_{\partial M} \}.
\]

Note that, by definition, $S=S^+\oplus S^-$ is $\mathbf{Z}_2$-graded and 
$\mathcal{B}_f$ can be decomposed as $\mathcal{B}_f=\mathcal{B}_f^+ \oplus \mathcal{B}_f^-$ where $\mathcal{B}_f^{\pm}$ are differential operators $C^{\infty}(M,S^{\pm}) \to C^{\infty}(M, S^{\mp})$. 
Because \(\chi\) is even with respect to the grading, \(\mathcal{B}_{f,s}\) splits as \(\mathcal{B}_{f,s}=\mathcal{B}_{f,s}^{+}\oplus\mathcal{B}_{f,s}^{-}\) with
\[
    \mathcal{B}_{f,s}^{\pm}: \{u\in C_c^{\infty}(M,S^{\pm}) \colon \chi(u|_{\partial M}) = u|_{\partial M}\} \to L^2(M,S^{\mp}).
\]
By \cite{CZ24}*{Theorem~3.4}, the operator $\mathcal{B}_{f,s}$ is self-adjoint and Fredholm.
Its \textit{index} is defined as
\[
    \ind(\mathcal{B}_{f,s}):=\dim (\ker(\mathcal{B}_{f,s}^+)) - \dim (\ker(\mathcal{B}_{f,s}^-)).
\]

\subsection{Spectral flow for odd-dimensional manifolds}
Assume that $\dim M$ is odd and let $\rho_M\in C^\infty(M,U(l))$ be a smooth map on $M$ with values in the unitary group $U(l)$ such that the commutator \([\mathcal{D},\rho_M]\) defines a bounded operator on \(\operatorname{dom}(\mathcal{B}_{f,s})\).  

Following \cite{Ge93}*{Section~1, page~491}, consider the trivial bundle \(\mathcal{E}_0 := M \times \mathbf{C}^l\) of rank $l$ over $M$ with a trivial connection $\D$.
The map \(\rho_M\) determines a family of Hermitian connections
\[
    \nabla^{\mathcal{E}_0}(t):=\D + t \rho_M^{-1}[\D, \rho_M], \quad t\in [0,1],
\]
with curvature
\[
    R^{\nabla^{\mathcal{E}_0}(t)}= -t(1-t)(\rho_M^{-1}(\D\rho_M))^2.
\]
Using this family of connections we construct a corresponding family of Callias operators. On the twisted Dirac bundle \(S\otimes\mathbf{C}^{l}\) set
\[
\nabla(t) := \nabla^{S}\otimes\operatorname{id} + \operatorname{id}\otimes\nabla^{\mathcal{E}_0}(t), \quad t\in[0,1],
\]
and let \(\mathcal{D}(t)\) be the Dirac operator associated with \(\nabla(t)\); explicitly,
\[
\mathcal{D}(t) = \mathcal{D} + t\,\rho_M^{-1}[\mathcal{D},\rho_M] =(1-t)\mathcal{D}+ t\rho_M^{-1} \mathcal{D} \rho_M.
\]

The involution \(\sigma\) extends naturally to \(S\otimes\mathbf{C}^{l}\); we denote the extension again by \(\sigma\). For notational simplicity we identify \(S\otimes\mathbf{C}^{l}\) with \(S\) when no confusion arises.

Observe that \(\rho_M\) preserves \(\operatorname{dom}(\mathcal{B}_{f,s})\) and commutes with both \(f\) and \(\sigma\). For $t\in[0,1]$, define a family of Callias operators on \(\operatorname{dom}(\mathcal{B}_{f,s})\) by
\[
\mathcal{B}_{f}(t) := (1-t)\mathcal{B}_{f} + t\,\rho_M^{-1}\mathcal{B}_{f}\rho_M
= \mathcal{D}(t) + f\sigma.
\]
Then \(\{\mathcal{B}_{f,s}(t)\}_{t\in[0,1]}\) forms a continuous family of self‑adjoint Fredholm operators in the Riesz topology (cf.~\cite{BMJ05}, \cite{Lesch05}).

\begin{definition}[{\cite{Shi24+}}]
The \textit{spectral flow} of the family \(\{\mathcal{B}_{f,s}(t)\}_{t\in[0,1]}\), denoted by $\sflow(\mathcal{B}_{f,s},\rho_M)$, is defined to be the net number of eigenvalues of $\mathcal{B}_{f,s}(t)$ that change from negative to nonnegative as $t$ increases from $0$ to $1$.
\end{definition}

\begin{remark}
    If every single operator $\mathcal{B}_{f,s}(t)$ in such a family is invertible, there cannot be any eigenvalue changing its sign when $t$ varies from $0$ to $1$, and therefore, the spectral flow $\sflow(\mathcal{B}_{f,s},\rho_M)$ is forced to vanish.
\end{remark}

\subsection{Relative index and relative spectral flow}
Let $(\mathcal{E},\mathcal{F})$ be a GL-pair on $M$ with support $K$.
Choose a smooth compact spin submanifold \(\mathcal{W}\) containing \(\partial M\) with boundary, whose interior contains $K$. Denote by \(\mathcal{W}^-\) a copy of \(\mathcal{W}\) with the opposite orientation. The double $\mathrm{d}\mathcal{W}:=\mathcal{W}\cup_{\partial\mathcal{W}}\mathcal{W}^-$
carries a natural spin structure induced from \(\mathcal{W}\). On \(\mathrm{d}\mathcal{W}\) define the \textit{virtual bundle} \(V(\mathcal{E},\mathcal{F})\) which coincides with \(\mathcal{E}\) over \(\mathcal{W}\) and with \(\mathcal{F}\) over \(\mathcal{W}^-\).

If \(\dim M\) is even, the \textit{relative index} of \((\mathcal{E},\mathcal{F})\) is the index of the spin Dirac operator on \(\mathrm{d}\mathcal{W}\) twisted by this virtual bundle (see \cite{CZ24}):
\begin{equation}\label{eq:relative_index}
    \indrel(M; \mathcal{E},\mathcal{F}) := \ind( \slashed{D}_{{\rm d}\mathcal{W},V(\mathcal{E},\mathcal{F})} ).
\end{equation}

If $\dim M$ is odd, let $\rho=\rho^+\oplus \rho^-\in C^{\infty}(\mathcal{W},U(l)\oplus U(l))$ such that \(\rho^+=\rho^-\) is locally constant near \(\partial\mathcal{W}\). Gluing \(\rho^+\) and \(\rho^-\) across the boundary gives a smooth map \(\widetilde{\rho}\in C^{\infty}(\mathrm{d}\mathcal{W},U(l))\). Let $\rho_M$ denote the trivial extension of $\rho$ to $M$.

\begin{definition}[Relative spectral flow]
The \textit{relative spectral flow} of the triple $(\mathcal{E},\mathcal{F},\rho_M)$ is defined to be the spectral flow of a family of spin Dirac operators  on ${\rm d}\mathcal{W}$ twisted by the virtual bundle, i.e., 
\begin{equation}\label{eq:relative_spectral_flow}
    \sfrel(M; \mathcal{E},\mathcal{F}, \rho_M) := \sflow( \slashed{D}_{{\rm d}\mathcal{W},V(\mathcal{E},\mathcal{F})},\widetilde{\rho}).
\end{equation}
\end{definition}

\subsection{Spectral estimates}
In this subsection, we assume that \(\dim M\) is odd.
Let \(\{e_i\}_{i=1}^m\) be a local orthonormal frame on \(TM\) with dual coframe \(\{e^i\}_{i=1}^m\).
For the Dirac operator \(\mathcal{D}(t)\) the Bochner–Lichnerowicz–Schr\"{o}dinger-Weitzenb\"ock formula (cf.~\cite{LM89}) reads
\begin{equation}\label{eq:BLW}
    \mathcal{D}^2(t) = \nabla^*\nabla(t) + \mathscr{R}(t),
\end{equation}
where
\[
\nabla^*\nabla(t) := -\sum_i\bigl(\nabla_{e_i}(t)\nabla_{e_i}(t)-\nabla_{\nabla^{TM}_{e_i}e_i}(t)\bigr)
\]
is the connection Laplacian and
\begin{equation}\label{eq:curvature_endomorphism}
    \mathscr{R}(t) := \sum_{i<j} c(e^i)c(e^j)\,R^{\nabla(t)}_{e_i,e_j} 
\end{equation}
is the curvature endomorphism (\(R^{\nabla(t)}\) denotes the curvature tensor of \(\nabla(t)\)).

Recall that Green’s formula \cite{Taylor11}*{Proposition~9.1} yields that for $u\in C_c^{\infty}(M,S)$, 
\begin{equation}\label{eq:Green_formula_Dirac}
    \int_M \langle \mathcal{D}(t) u, u\rangle dV=\int_M \langle u, \mathcal{D}(t) u\rangle dV - \int_{\partial M} \langle u, c(\nu^*) u\rangle dA.
\end{equation}
Moreover, for $u\in C_c^{\infty}(M,S)$,
\begin{equation}\label{eq:Green_formula_connection}
    \int_M |\nabla(t) u|^2 dV = \int_M \langle u,\nabla^*\nabla(t) u \rangle dV + \int_{\partial M} \langle u, \nabla_{\nu}(t) u \rangle dA.
\end{equation}

From \eqref{eq:BLW}, \eqref{eq:Green_formula_Dirac} and \eqref{eq:Green_formula_connection} we see that for $u\in C_c^{\infty}(M,S)$,
\begin{equation}\label{eq:BLW+Green}
\begin{aligned}
    \int_M |\mathcal{D}(t)u|^2 dV=& \int_M|\nabla(t) u|^2 dV + \int_M \langle u, \mathscr{R}(t) u\rangle dV \\
      & - \int_{\partial M} \langle u, c(\nu^*) \mathcal{D}(t) u + \nabla_{\nu}(t) u \rangle dA.
\end{aligned}
\end{equation}

Since 
\[
|\mathcal{B}_f(t) u|^2 = |\mathcal{D}(t) u|^2 + \langle\mathcal{D}(t)u, f\sigma u\rangle + \langle f\sigma u, \mathcal{D}(t) u \rangle  + f^2|u|^2,
\]
thus
\begin{equation}\label{eq:integral_Callias_square}
\begin{aligned}
\int_M |\mathcal{B}_f(t) u|^2 dV 
= &  \int_M |\mathcal{D}(t) u|^2 dV + \int_M \langle u, \underbrace{(\mathcal{D}(t) f\sigma+f\sigma \mathcal{D}(t) )}_{=c({\rm d}f)\sigma} u\rangle dV + \int_M f^2|u|^2 dV \\
                             & - \int_{\partial M} \langle u, c(\nu^*) f\sigma u \rangle dA. 
\end{aligned}
\end{equation}

On the restricted bundle \(S^{\partial}:=S|_{\partial M}\), we define a Clifford multiplication and a connection by
\[
   c^{\partial}(e^i):=-c(e^i)c(\nu^*),\quad \nabla^{\partial}_{e_i}(t):=\nabla_{e_i}(t)-\frac{1}{2}c^{\partial}(\nabla_{e_i}(t)\nu^*), 
\]
where $\{e_i\}_{i=1}^{m-1}$ is a local orthonormal frame on $T(\partial M)$ with dual coframe \(\{e^i\}_{i=1}^{m-1}\), and $\nu^*$ is the dual covector of the outward unit normal $\nu=e_m$ to $\partial M$.
Then
\[
\mathcal{A}(t):=\sum_{i=1}^{m-1} c^{\partial}(e^i)\nabla^{\partial}_{e_i}(t)
\]
is the associated family of boundary Dirac operators. One checks that
\[
\mathcal{A}(t)c(\nu^{*})=-c(\nu^{*})\mathcal{A}(t),\quad
\mathcal{A}(t)\sigma=\sigma\mathcal{A}(t),\quad
\chi\mathcal{A}(t)=-\mathcal{A}(t)\chi.
\]
Consequently, for any \(u\in C^{\infty}(M,S)\) satisfying the local boundary condition \eqref{eq:local_boundary_condition},
\begin{equation}\label{eq:vanishing_boudary_term}
    \langle u|_{\partial M}, \mathcal{A}(t) u|_{\partial M} \rangle = 0.
\end{equation}

Let \(H:=\frac{1}{m-1}\sum_{i=1}^{m-1}\langle e_i,\nabla_{e_i}\nu\rangle\) be the mean curvature of \(\partial M\) with respect to $\nu$. 
A direct computation using the definitions yields the fundamental boundary identity (cf.~\cite{Bar96}*{Proposition~2.2}):
\begin{equation}\label{eq:boundary_identity}
  \mathcal{A}(t)=\frac{m-1}{2}\,H + c(\nu^{*})\mathcal{D}(t) + \nabla_{\nu}(t).  
\end{equation}

For $t\in [0,1]$, we define a family of Penrose operators on $S$ by
\[
\mathcal{P}_{\xi}(t)u:=\nabla_{\xi}(t)u-\frac1m c(\xi^*)\mathcal{D}(t)u,\quad \xi \in TM.
\]
It satisfies (cf.~\cite{BHMMM15}*{Section~5.2})
\begin{equation}\label{eq:Penrose_identity}
    |\nabla(t) u|^2 = |\mathcal{P}(t)|^2 +\frac{1}{m} |\mathcal{D}(t) u|^2.
\end{equation}
Combining \eqref{eq:BLW+Green}, \eqref{eq:integral_Callias_square}, \eqref{eq:vanishing_boudary_term}, \eqref{eq:boundary_identity} and \eqref{eq:Penrose_identity}, we obtain
\begin{equation}\label{eq:Penrose_spectral_estimates}
\begin{aligned}
   \int_M  |\mathcal{B}_{f,s}(t) u|^2 dV
    =& \frac{m}{m-1} \int_M \Big( |\mathcal{P}(t) u|^2 + \langle u, \mathscr{R}(t) u \rangle \Big) dV \\
     & + \int_M  \left \langle u, f^2 u +  c({\rm d} f) \sigma u \right \rangle  dV - \int_{\partial M} \Big ( sf - \frac{m}{2} H \Big) |u|^2 dA. \\
\end{aligned}
\end{equation}

Assume now that \(f\) is an admissible potential and that $u$ is a nontrivial spinor satisfying \(\mathcal{B}_{f,s}(t)u = 0\) for some \(t \in [0,1]\). We derive a lower bound for the Penrose term.
Following \cite{HKKZ24}, set \(v_i(t):=c(e^i)\nabla_{e_i}(t)u + \frac{f}{m}\sigma u\).
Observe that \(\sum_{i=1}^m v_i(t) = 0\). Writing \(v(t) = (v_1(t), \bar{v}(t))\) and applying the Cauchy–Schwarz inequality to the relation \(\sum_{i=2}^m v_i(t) = -v_1(t)\), we find
\[
(m-1)|\bar{v}(t)|^2 \geq |v_1(t)|^2.
\]
Since \(|\mathcal{P}(t)u|^2 = \sum_{i=1}^m |v_i(t)|^2 = |v_1(t)|^2 + |\bar{v}(t)|^2\), it follows that
\begin{equation}\label{eq:Penrose_Kato}
|\mathcal{P}(t)u|^2 \geq \frac{m}{m-1}|v_1(t)|^2 = \frac{m}{m-1}\Bigl|\nabla_{e_1}(t)u - \frac{f}{m}c(e^1)\sigma u\Bigr|^2.
\end{equation}

Fix a constant \(c > 0\) such that \(\alpha := \frac{m}{m-1} - \frac{1}{4c} > 0\). From \eqref{eq:Penrose_Kato} we deduce
\[
\begin{aligned}
|\mathcal{P}(t)u|^2
& \geq \frac{m}{m-1}|\nabla_{e_1}(t)u|^2
     - \frac{2}{m-1}f\langle\nabla_{e_1}(t)u,\,c(e^1)\sigma u\rangle
     + \frac{1}{m(m-1)}f^2|u|^2.
\end{aligned}
\]
Completing the square with respect to the first two terms yields the identity
\[
\begin{aligned}
|\mathcal{P}(t)u|^2 = &
\frac{1}{4c}|\nabla_{e_1}(t)u|^2 + \alpha\Bigl|\nabla_{e_1}(t)u - \frac{1}{\alpha(m-1)}f\,c(e^1)\sigma u\Bigr|^2 \\
&+ \underbrace{\Bigl(\frac{1}{m(m-1)} - \frac{1}{\alpha(m-1)^2}\Bigr)}_{:=\,\alpha_1} f^2|u|^2.
\end{aligned}
\]
Therefore, we obtain 
\[
|\mathcal{P}(t)u|^2 \geq \frac{1}{4c}|\nabla_{e_1}(t)u|^2 + \alpha_1 f^2|u|^2.
\]

By Kato's inequality, \(|\nabla(t)u| \geq |\nabla|u||\) almost everywhere. Hence, at any point where \(|u|\) is differentiable, we may choose a local orthonormal frame with $e_1$ given by the unit gradient such that $|\nabla_{e_1} |u||=|\nabla |u| |$ (if \(\nabla|u| \neq 0\); otherwise the inequality holds trivially). 
Thus, for each $t\in [0,1]$ we have
\begin{equation}\label{eq:Kato_Penrose}
    |\mathcal{P}(t) u|^2 \geq \frac{1}{4c} |{\rm d} |u||^2 + \alpha_1 f^2 |u|^2
\end{equation}
holds almost everywhere on $M$. 

Green's formula provides the following identity (cf.~\cite{HKKZ24}*{(3.12)})
\begin{equation}\label{eq:identity_for_spectral_estimates}
\int_{\partial M} \langle c(\nu^*) u, f\sigma u\rangle\,dA = -\int_M \bigl(2f^2|u|^2 + \langle u, c(\mathrm{d}f)\sigma u\rangle\bigr)\,dV.
\end{equation}

Finally, putting \eqref{eq:Penrose_spectral_estimates}, \eqref{eq:Kato_Penrose} and \eqref{eq:identity_for_spectral_estimates} together, for any \(u\in\ker (\mathcal{B}_{f,s}(t))\) $(t\in [0,1])$ we have 
\begin{equation}\label{eq:spectral_flow_connection_spectral_estimate}
\begin{aligned}
\int_{\partial M} \Big( \alpha_2 sf - \frac{m}{2} H \Big) |u|^2 dA
\geq & \frac{m}{m-1} \int_M \Big( \frac{1}{4c} |{\rm d} |u||^2 + \langle u, \mathscr{R}(t) u \rangle \Big) dV \\
     & + \int_M  \left \langle u, \alpha_2 f^2 u +  \alpha_2 c({\rm d}f) \sigma u \right \rangle  dV,
\end{aligned}
\end{equation}
where $\alpha_2= 1-\frac{m\alpha_1}{m-1}$. Since $\alpha>0$, $\alpha_1=\frac{1}{m(m-1)}-\frac{1}{\alpha(m-1)^2}<\frac{1}{m(m-1)}$ and so 
\[
 \alpha_2= 1-\frac{m\alpha_1}{m-1}>1-\frac{1}{(m-1)^2}>0.
\]

If $\dim M$ is even, we also have analogous spectral estimate for a single Callias operator.
For any \(u\in\ker (\mathcal{B}_{f,s})\),
\begin{equation}\label{eq:even_connection_spectral_estimate}
\begin{aligned}
\int_{\partial M}\Bigl(\alpha_2 sf-\frac{m}{2}H\Bigr)|u|^2\,dA
&\ge \frac{m}{m-1}\int_M\!\Bigl(\frac1{4c}|\mathrm{d}|u||^2+\langle u,\mathscr{R}u\rangle\Bigr)dV \\
&\quad + \int_M\bigl\langle u,\;\alpha_2 f^2 u+\alpha_2 c(\mathrm{d}f)\sigma u\bigr\rangle\,dV,
\end{aligned}
\end{equation}
where $\mathscr{R}$ is defined in \eqref{eq:curvature_endomorphism}.


\section{ \texorpdfstring{Proof of main theorems}{~} } \label{sec:formula_in_codimension_zero}
In this section, we will prove \cref{mthm:quantitative_Llarull_theorem}, Corollary \ref{cor:non-approximation_Llarull} and \cref{mthm:quantitative_Llarull_theorem_boundary}.

\begin{proof}[Proof of \cref{mthm:quantitative_Llarull_theorem}]
 We argue by contradiction. Assume that
\begin{equation}\label{eq:assumption_noncompact_Llarull}                                                  
    \inf_{M} \scalM \geq -\frac{1}{c} \lambda_1(M,g_M),
\end{equation}
where $c>\frac{m-1}{4m}$.
Set $K:=\suppdPhi$, $U:= \{ p \in M : \mathrm{d}_p \Phi \neq 0 \}$ and $U_\delta := \{ p \in \mathcal{U} : \mathbf{a}(p) < \delta \}$
for some constant $0 < \delta < 1$, where $\mathbf{a}(p)$ is the area contraction constant of $\Phi$ at $p$ as in \eqref{defn:contraction_constant}.

Let \(\mathcal{W}\) be a compact submanifold of \(M\) with smooth boundary such that its interior contains \(K\). 
Let $\mathcal{U}$ be a small open neighborhood of $K$ in $\mathcal{W}^{\circ}$ such that $\scalM\geq \scal_0$ on $\mathcal{U}\setminus K$ for some constant $0<\scal_0 < m(m-1)$.
Since \(\Phi\) is locally constant outside \(K\) and has nonzero degree, its restriction \(\Phi|_{\mathcal{W}} : \mathcal{W} \to \mathbf{S}^m\) (still denoted by \(\Phi\)) is locally constant near \(\partial\mathcal{W}\) and also of nonzero degree.

Following the argument in the proof of \cite{CZ24}*{Lemma~5.1}, fix a base point \(* \in \sphm\). 
Since \(\Phi\) is locally constant near \(\partial \mathcal{W}\) and \(\partial \mathcal{W}\) has finitely many components, there exists finitely many distinct points $q_1,\cdots, q_k\in \sphm$ such that $\Phi(\mathcal{W}\setminus K)=\{q_1,\cdots, q_k\}$. 

Write \(\Omega_i = \Phi^{-1}(q_i) \cap (\mathcal{W} \setminus K)\). Note that each \(\Omega_i\) is open and \(\mathcal{W} \setminus K = \bigsqcup_{i=1}^k \Omega_i\).
By continuity, \(\Phi(\overline{\Omega}_i) = \{q_i\}\), so the closures \(\overline{\Omega}_i\) form a family of pairwise disjoint closed subsets of \(\mathcal{W}\). 

Consequently, we can choose pairwise disjoint open neighborhoods \(V_i \supset \overline{\Omega}_i\) and smooth cut-off functions \(\nu_i: \mathcal{W} \to [0,1]\) with $\nu_i=1$ on $\overline{\Omega}_i$ on $\mathcal{W}\setminus V_i$. 
Let \(\gamma_i: [0,1] \to \sphm\) be geodesics joining \(\gamma_i(0) = *\) to \(\gamma_i(1) = q_i\). 
We now construct a smooth map \(\Psi: \mathcal{W} \to \sphm\) by\footnote{A similar trick also appears in \cite{LSWZ24+}.}
\[
\Psi(p) = 
\begin{cases}
\gamma_i(\nu_i(p)), & \text{if } p \in V_i, \\
*, & \text{if } p \in \mathcal{W} \setminus \bigsqcup_{i=1}^k V_i.
\end{cases}
\]
By construction, \(\Psi = \Phi=q_i\) on each \(\overline{\Omega}_i\), and the induced map on \(2\)-vectors, \(\Lambda^2 \D\Psi: \Lambda^2 T\mathcal{W} \to \Lambda^2 T\sphm\), vanishes identically.

Let $\Theta:{\rm d}\mathcal{W}:=\mathcal{W}\cup_{\partial \mathcal{W}} \mathcal{W}^- \to \sphm$ be the smooth map defined by $\Theta|_{\mathcal{W}}=\Phi$ and $\Theta|_{\mathcal{W}^-}=\Psi$, where $\mathcal{W}^-$ is a copy of $\mathcal{W}$ with the opposite orientation.
Since \(\Psi\) has degree zero,
\begin{equation}\label{eq:nonzero_degree}
   \deg(\Theta) = \deg(\Phi) \neq 0. 
\end{equation}

We will separate the argument into the cases where $m$ is even and $m$ is odd.

$\underline{\textbf{Case~1}}$. 
Assume first that \(m\) is even. 
Then the bundles $\mathcal{E}:=\Phi^*\mathcal{E}_0$ and $\mathcal{F}:=\Psi^*\mathcal{E}_0$, where $\mathcal{E}_0$ is the positive half spinor bundle over $\sphm$, form a GL-pair over $\mathcal{W}$ with support $K$ (cf.~\cite{CZ24}*{Lemma~5.1}). 
By Llarull's argument \cite{Ll98}, which goes back to \cite{GL83}, $\deg(\Theta)\neq 0$ implies
\begin{equation}\label{eq:relative_index_not_vanishing}
\operatorname{ind}\bigl(\slashed{D}_{\mathrm{d}\mathcal{W},\; \Theta^*\mathcal{E}_0}\bigr) \neq 0.  
\end{equation}
By construction, \(\Theta^*\mathcal{E}_0 = V(\mathcal{E},\mathcal{F})\). Because the pair \((\mathcal{E},\mathcal{F})\) is trivial outside \(K\), it extends trivially to all of \(M\). Consequently, from \eqref{eq:relative_index} and \eqref{eq:relative_index_not_vanishing} we have
\begin{equation}\label{eq:nonvanishing_relative_index}
    \indrel(M;\mathcal{E},\mathcal{F})= \ind (\slashed{D}_{ {\rm d}\mathcal{W},V(\mathcal{E},\mathcal{F})}) =\ind (\slashed{D}_{{\rm d}\mathcal{W},\Theta^*\mathcal{E}_0}) \neq 0.
\end{equation}

Denote by \(S\) the relative Dirac bundle over $M$ associated with \((\mathcal{E},\mathcal{F})\) 
and by $\mathcal{D}$ the corresponding Dirac operator on $S$ (see \cite{CZ24}*{Example~2.5}). 
Then curvature endomorphism in the Bochner–Lichnerowicz–Schr\"{o}dinger-Weitzenb\"ock formula (cf.~\cite{LM89}) is given by
\begin{equation}\label{eq:curvature_endomorphism_even}
    \mathscr{R}= \frac{1}{4} \scalM + \mathscr{R}^{\mathcal{E}\oplus\mathcal{F}},
\end{equation}
where $\mathscr{R}^{\mathcal{E}\oplus\mathcal{F}}$ is an even endomorphism of the bundle $S$ depending linearly on the curvature tensor of the connection $\nabla^{\mathcal{E}\oplus\mathcal{F}}$.
Note also that $\mathcal{F}$ is a flat bundle since $\Psi$ induces the zero map on $2$-vectors. 
Together with \cite{Ll98}*{(4.6)}, we obtain that for each point $p\in M$,
\begin{equation}\label{eq:curvature_term_estimate_area_decreasing}
   \mathscr{R}_p^{\mathcal{E}\oplus\mathcal{F}}\geq - \mathbf{a}(p) \cdot \frac{m(m-1)}{4},
\end{equation}
where $\mathbf{a}(p)$ is the area contraction constant of $\Phi$ at $p$ defined in \eqref{defn:contraction_constant}.

Let \(\psi\colon M\to[0,1]\) be a smooth cut-off function with \(\psi=0\) on \(K\) and \(\psi=1\) outside \(\overline{\mathcal{U}}\).
As in \cite{Zh20}, for \(\varepsilon>0\), set \(f:=\varepsilon\psi\). Then \(f\) is admissible, with $f|_K=0$ and \(f|_{M\setminus\overline{\mathcal{U}}}=\varepsilon\).

Consider the Callias operator \(\mathcal{B}_f=\mathcal{D}+f\sigma\). By the splitting theorem for the index of Callias operators \cite{Rad94}*{Proposition~2.3} (also see~\cite{CZ24}*{Theorem~3.6}),
\begin{equation}\label{eq:to_splitting}
    \ind(\mathcal{B}_{f}) = \ind(\mathcal{B}_{f,-1}^{\mathcal{W}}) + \ind(\mathcal{B}_{f,-1}^{ \overline{M\setminus \mathcal{W}} }), 
\end{equation}
where $\mathcal{B}_{f,-1}^{\mathcal{W}}$ and $\mathcal{B}_{f,-1}^{ \overline{M\setminus \mathcal{W}} }$ denote the corresponding Callias operator on $\mathcal{W}$ and $\overline{M\setminus \mathcal{W}}$, respectively.

Since $\mathcal{B}_{f,-1}^{ \overline{M\setminus \mathcal{W}} }$ is a Callias operator on a relative Dirac bundle with empty support and the sign $s=-1$ on all of $\partial (\overline{M\setminus \mathcal{W}})$, by \cite{CZ24}*{Lemma~3.7} we have
\begin{equation}\label{eq:vanishing_to_at_infinity}
            \ind(\mathcal{B}_{f,-1}^{ \overline{M\setminus \mathcal{W}} }) = 0.
\end{equation}
By \cite{CZ24}*{Corollary~3.9}, we have  
    \begin{equation}\label{eq:correspond}
            \ind(\mathcal{B}_{f,-1}^{\mathcal{W}}) = \indrel(M;\mathcal{E},\mathcal{F}).     
    \end{equation}
    
Combining \eqref{eq:nonvanishing_relative_index}, \eqref{eq:to_splitting}, \eqref{eq:vanishing_to_at_infinity} and \eqref{eq:correspond}, we conclude that \(\ind(\mathcal{B}_f)\neq0\).
Then there exists a nonzero element $u\in \ker(\mathcal{B}_{f})$. 
Using the spectral estimate \eqref{eq:even_connection_spectral_estimate} and \eqref{eq:curvature_endomorphism_even}, we obtain
\[
\begin{aligned}
0 \geq \; & \frac{m}{m-1} \int_M \Big( \frac{1}{4c} |{\rm d} |u||^2 + \frac{1}{4}\,\scalM|u|^2 + \bigl\langle u,\mathscr{R}^{\mathcal{E}\oplus\mathcal{F}} u\bigr\rangle \Big) dV \\
     & + \int_M  \left \langle u, \; \alpha_2 f^2 u +  \alpha_2 c({\rm d}f) \sigma u \right \rangle  dV,
\end{aligned}
\]
where $\alpha_2>0$ is a constant.

Recall that \(f = \varepsilon\psi\) and \(\operatorname{supp}(\mathrm{d}\psi) \subset \overline{\mathcal{U}}\setminus U\).  
Since \(\mathscr{R}^{\mathcal{E}\oplus\mathcal{F}} = 0\) on \(M\setminus U\), we split the integrals over \(U\) and \(M\setminus U\).  
Using \eqref{defn:bottom_spectrum},
\[
\begin{aligned}
0 \geq \; & \frac{m}{4c(m-1)}\,\lambda_1(M,g_M)\,\|u\|_{L^2(M)}^2
          + \frac{m}{m-1} \int_{U}\!\Bigl(\frac{1}{4}\,\scalM|u|^2 + \bigl\langle u,\mathscr{R}^{\mathcal{E}\oplus\mathcal{F}} u\bigr\rangle\Bigr)\,dV \\
          & + \Bigl(\frac{m \,\scal_0}{4(m-1)} - \varepsilon\alpha_2 \sup_{\overline{\mathcal{U}}\setminus U}|\mathrm{d}\psi|\Bigr)\,\|u\|_{L^2(\overline{\mathcal{U}}\setminus U)}^2 
           + \int_{M\setminus\overline{\mathcal{U}}}\!\Bigl(\frac{m \,\scalM}{4(m-1)} + \varepsilon^2 \alpha_2 \Bigr)|u|^2\,dV,
\end{aligned}
\]
where for $A\subset M$, the symbol $\|u\|_{L^2(A)}^2$ denotes $\int_A |u|^2 dV$.

On \(U\) we have \(\scalM \ge m(m-1)\) by hypothesis.  
Because \(\Phi\) is area decreasing, its area contraction constant satisfies \(\mathbf{a}(p)\le 1\) at every \(p\in M\). Hence
\[
\scalM(p) \ge \mathbf{a}(p)\cdot m(m-1), \quad \text{for all } p\in U.
\]
Applying the curvature estimate \eqref{eq:curvature_term_estimate_area_decreasing} yields
\begin{equation}\label{eq:spectral_Llarull_theorem_even_case}
\begin{aligned}
0 \geq \; & \frac{m}{4c(m-1)}\, \lambda_1(M,g_M)\,\|u\|_{L^2(M)}^2
          + \frac{m}{4(m-1)}\int_{U}\underbrace{\Bigl(\scalM - \mathbf{a}(p)\,m(m-1)\Bigr)}_{\ge 0}\,|u|^2\,dV \\
          & + \Bigl(\frac{m \,\scal_0}{4(m-1)} - \varepsilon\alpha_2 \sup_{\overline{\mathcal{U}}\setminus U}|\mathrm{d}\psi|\Bigr)\,\|u\|_{L^2(\overline{\mathcal{U}}\setminus U)}^2 
           + \Bigl(\frac{m}{4(m-1)}\inf_{M\setminus\overline{\mathcal{U}}}\scalM + \varepsilon^2 \alpha_2 \Bigr)\,\|u\|_{L^2(M\setminus\overline{\mathcal{U}})}^2.
\end{aligned}
\end{equation}

Choose \(\varepsilon>0\) sufficiently small so that \(\frac{m\, \scal_0}{4(m-1)} > \varepsilon\alpha_2 \sup_{\overline{\mathcal{U}}\setminus U}|\mathrm{d}\psi|\).

Splitting the first term according to the decomposition \(M = \mathcal{U} \cup (M\setminus\overline{\mathcal{U}})\) and using the lower bound \(\scalM - \mathbf{a}(p) \cdot m(m-1) \ge (1-\delta)m(m-1) > 0\) on \(U_\delta \subset U\), we obtain from \eqref{eq:spectral_Llarull_theorem_even_case}
\[
\begin{aligned}
0 \geq \; & \frac{m}{4c(m-1)}\,\lambda_1(M,g_M)\,\|u\|_{L^2(\mathcal{U})}^2
          + \underbrace{\frac{1}{4}(1-\delta)m^2}_{>0}\,\|u\|_{L^2(U_\delta)}^2 \\
          & + \underbrace{\Bigl(\frac{m\, \scal_0}{4(m-1)} - \varepsilon \alpha_2 \sup_{\overline{\mathcal{U}}\setminus U}|\mathrm{d}\psi|\Bigr)}_{>0}\,\|u\|_{L^2(\overline{\mathcal{U}}\setminus U)}^2 \\
          & + \underbrace{\Bigl(\frac{m}{4(m-1)}\inf_{M\setminus\overline{\mathcal{U}}}\scalM
                     + \frac{m}{4c(m-1)}\lambda_1(M,g_M) + \varepsilon^2 \alpha_2 \Bigr)}_{>0\ \text{by } \eqref{eq:assumption_noncompact_Llarull}}\,
             \|u\|_{L^2(M\setminus\overline{\mathcal{U}})}^2.
\end{aligned}
\]
All terms on the right‑hand side are nonnegative with some strictly positive coefficients. Consequently, \(u\) must vanish on \(U_\delta \cup (\overline{\mathcal{U}}\setminus U) \cup (M\setminus\overline{\mathcal{U}}) = U_\delta \cup (M\setminus U)\). In particular, \(u = 0\) on the open set \(U_\delta\).
Since $u$ belongs to the kernel of the elliptic operator \(\mathcal{B}_f\) and $M$ is connected, the unique continuation property \cite{BW93}*{Chapter 8} forces $u\equiv 0$ on $M$ – a contradiction.

\vspace{3mm}

$\underline{\textbf{Case~2}}$. 
Now assume that $m$ is odd.
Following \cite{Ge93}*{page~493} (also see \cite{LSW24}*{Section~2.3}), one can construct a trivial bundle $\mathcal{E}_0$ over $\mathbf{S}^m$ equipped with the trivial connection $\mathrm{d}$, together with a smooth map $\rho_0 \in C^{\infty}(\mathbf{S}^m, U(l))$, such that
\[
\nabla^{\mathcal{E}_0}(t) := \mathrm{d} + t\, \rho_0^{-1}[\mathrm{d}, \rho_0], \quad t \in [0,1],
\]
defines a smooth family of Hermitian connections on $\mathcal{E}_0$.
In this setting, the odd Chern character form associated to $\rho_0$ and $\D$, defined by (\cite{Ge93}, also see~\cite{Zh01}*{(1.50)})
\[
    \ch(\rho_0,\D)
 := \sum_{j=0}^{\infty} \left(\frac{1}{2\pi\sqrt{-1}} \right)^{j+1} \frac{j!}{(2j+1)!} \, {\rm tr}\big[(\rho_0^{-1}(\D \rho_0))^{2j+1}\big],
\]
satisfies (cf.~\cite{Ge93}*{page~493}, also see \cite{LSW24}*{page~1108})
\begin{equation}\label{eq:odd_Chern_character_on_sphere}
    \int_{\sphm} \ch(\rho_0,\D) =-1.
\end{equation}

Since the aforementioned maps $\Phi=\Psi$ on $\overline{\mathcal{W}\setminus K}$, the bundles $\mathcal{E}:=\Phi^*\mathcal{E}_0$, $\mathcal{F}:=\Psi^*\mathcal{E}_0$ form a GL-pair over $\mathcal{W}$ with support $K$.
As in \cite{Shi24+}, set $\rho^+=\Phi^*\rho_0$ and $\rho^-=\Psi^*\rho_0$.
Since $\Phi=\Psi$ is locally constant near $\partial \mathcal{W}$, $\rho^+=\rho^-$ is locally constant near $\partial \mathcal{W}$. Then $\rho^+$ and $\rho^-$ can be glued smoothly to yield $\widetilde{\rho}\in C^{\infty}({\rm d}M,U(l))$.

By definition, we have $\Theta^*\rho_0=\widetilde{\rho}$.
The pull-back bundle $\Theta^*\mathcal{E}_0=V(\mathcal{E},\mathcal{F})$ is a Hermitian vector bundle over ${\rm d}\mathcal{W}$ with a family of Hermitian connections induced by $\Theta^*\rho_0$. 
Let $\slashed{S}_{{\rm d}\mathcal{W}}$ be the complex spinor bundle of ${\rm d}\mathcal{W}$ and let $\slashed{D}_{\Theta^*\mathcal{E}_0}$ be the spin Dirac operator on ${\rm d}\mathcal{W}$ twisted by $\Theta^*\mathcal{E}_0$ with connection
\[
\nabla^{\slashed{S}_{{\rm d}\mathcal{W}}\otimes \Theta^*\mathcal{E}_0}(t)=\nabla^{\slashed{S}_{{\rm d}\mathcal{W}}} \otimes {\rm id} + {\rm id} \otimes \Theta^*\nabla^{\mathcal{E}_0}(t), \quad t\in [0,1].
\]
In this case, note that the curvature endomorphism in \eqref{eq:BLW} is given by
\begin{equation}\label{eq:odd_curvature_endomorphism}
    \mathscr{R}(t)=\frac{1}{4} \scalM + \mathscr{R}^{\mathcal{E}\oplus\mathcal{F}}(t),
\end{equation}
where $\mathscr{R}^{\mathcal{E}\oplus\mathcal{F}}(t)$ is an even endomorphism of the bundle $S$ depending linearly on the curvature tensor $(\Theta^*\nabla^{\mathcal{E}_0}(t))^2$.

It is proved in \cite{LSW24}*{page~1108} that
for each $t\in [0,1]$ and each $p\in M$,
\begin{equation}\label{eq:curvature_term_area_decreasing}
   \mathscr{R}_p^{\mathcal{E}\oplus\mathcal{F}}(t) \geq -\mathbf{a}(p) \cdot \frac{m(m-1)}{4},
\end{equation}
where $\mathbf{a}(p)$ is the area contraction constant of $\Phi$ at $p$ defined in \eqref{defn:contraction_constant}.
Moreover, the inequality is strict unless $t=\frac{1}{2}$. 

Since the pair $(\mathcal{E},\mathcal{F})$ is trivial and $\rho = \rho^+ \oplus \rho^- \in C^\infty(\mathcal{W}, U(l)\oplus U(l))$ vanishes outside $K$, they admit trivial extensions to all of $M$. We denote these extensions again by $(\mathcal{E},\mathcal{F})$ and write $\rho_M$ for the extension of $\rho$ to $M$.
Thus, combining \eqref{eq:relative_spectral_flow} with \cite{Ge93}*{Theorem 2.8}, we obtain
\begin{equation}\label{eq:non-vanishing_relative_spectral_flow}
\begin{aligned}
        \sfrel(M;\mathcal{E},\mathcal{F},\rho_M)
        =& \sflow (\slashed{D}_{ {\rm d}\mathcal{W},V(\mathcal{E},\mathcal{F})},\widetilde{\rho}) \\
        =& \sflow (\slashed{D}_{{\rm d}\mathcal{W},\Theta^*\mathcal{E}_0},\Theta^*\rho_0) \\
        =& -\int_{{\rm d}\mathcal{W}} \Af({\rm d}\mathcal{W})\wedge \Theta^*\ch(\rho_0,\D) \\
        =& -\int_{{\rm d}\mathcal{W}}  \Theta^*\ch(\rho_0,\D)_{[m]} \\
        =& -\deg(\Theta) \int_{\sphm} \ch(\rho_0,\D) \\
        =& \deg(\Theta) \neq 0,
\end{aligned}
\end{equation}
where $\Af({\rm d}\mathcal{W})$ is the $\widehat{A}$-form of ${\rm d}\mathcal{W}$; the fourth equality uses that only the term $\ch(\rho_0,\D)_{[m]}$ with degree $m$ of the odd Chern character form contributes, because the cohomology of \(\sphm\) is trivial in all degrees except $0$ and $m$; the fifth follows from \eqref{eq:odd_Chern_character_on_sphere}; and the final inequality is because of \eqref{eq:nonzero_degree}.

Denote by \(S\) the relative Dirac bundle over $M$ associated with \((\mathcal{E},\mathcal{F})\) 
and by $\mathcal{D}$ the corresponding Dirac operator on $S$ (see \cite{CZ24}*{Example~2.5}). 
Let $f$ be an admissible function such that $f=0$ on $K$ and $f=\varepsilon$ on $M\setminus \overline{\mathcal{U}}$ as in the preceding case. 

We consider a family of Callias operators $\mathcal{B}_{f}(t)=(1-t)\mathcal{B}_{f} + t \rho_M^{-1} \mathcal{B}_{f} \rho_M$ for $t\in [0,1]$, where \(\mathcal{B}_f=\mathcal{D}+f\sigma\). 
By the splitting theorem for the corresponding spectral flow \cite{Shi24+}*{Theorem~2.10}, we have 
\begin{equation}\label{eq:to_splitting_odd}
    \sflow(\mathcal{B}_{f},\rho_M)  = \sflow(\mathcal{B}_{f,-1}^{\mathcal{W}},\rho) + \sflow(\mathcal{B}_{f,-1}^{ \overline{M\setminus \mathcal{W}} },\rho_{ \overline{M\setminus \mathcal{W}} } ),
\end{equation}
where $\rho_{ \overline{M\setminus \mathcal{W}} }$ denotes the restriction of $\rho_M$ to $ \overline{M\setminus \mathcal{W}}$.
Since $\mathcal{B}_{f,-1}^{ \overline{M\setminus \mathcal{W}} }$ is a Callias operator on a relative Dirac bundle with empty support and the sign $s=-1$ on all of $\partial(\overline{M\setminus \mathcal{W}})$, \cite{Shi24+}*{Lemma~3.4} then implies
\begin{equation}\label{eq:vanishing_to_at_infinity_odd}
   \sflow(\mathcal{B}_{f,-1}^{ \overline{M\setminus \mathcal{W}} },\rho_{ \overline{M\setminus \mathcal{W}} } )=0.
\end{equation}

By \cite{Shi24+}*{Theorem~3.10}, we have  
    \begin{equation}\label{eq:correspond_odd}
            \sflow(\mathcal{B}_{f,-1}^{\mathcal{W}},\rho) = \sfrel(M;\mathcal{E},\mathcal{F},\rho_M).
    \end{equation}
    
Using \eqref{eq:non-vanishing_relative_spectral_flow}, \eqref{eq:to_splitting_odd}, \eqref{eq:vanishing_to_at_infinity_odd} and \eqref{eq:correspond_odd}, we obtain $ \sflow(\mathcal{B}_{f},\rho_M)  \neq 0$.
Thus, there exist some $t\in [0,1]$ and a nonzero $u\in \ker(\mathcal{B}_{f}(t))$. 
By \eqref{eq:spectral_flow_connection_spectral_estimate} and \eqref{eq:odd_curvature_endomorphism}, 
\[
\begin{aligned}
0 \geq \; & \frac{m}{m-1} \int_M \Big( \frac{1}{4c} |{\rm d} |u||^2 + \frac{1}{4}\,\scalM|u|^2 + \bigl\langle u,\mathscr{R}^{\mathcal{E}\oplus\mathcal{F}}(t) u\bigr\rangle \Big) dV \\
     & + \int_M  \left \langle u, \; \alpha_2 f^2 u +  \alpha_2 c({\rm d}f) \sigma u \right \rangle  dV.
\end{aligned}
\]

Recall that $f=\varepsilon\psi$ and $\supp({\rm d}\psi)\subset \overline{\mathcal{U}}\setminus U$. Note that $\mathscr{R}^{\mathcal{E}\oplus\mathcal{F}}(t)=0$ on $M\setminus U$.
Splitting the integrals over \(U\) and \(M\setminus U\) and using \eqref{defn:bottom_spectrum}, we obtain
\[
\begin{aligned}
0 \geq \; & \frac{m}{4c(m-1)}\,\lambda_1(M,g_M)\,\|u\|_{L^2(M)}^2
          + \frac{m}{m-1} \int_{U}\!\Bigl(\frac{1}{4}\,\scalM|u|^2 + \bigl\langle u,\mathscr{R}^{\mathcal{E}\oplus\mathcal{F}}(t) u\bigr\rangle\Bigr)\,dV \\
          & + \Bigl(\frac{m \,\scal_0}{4(m-1)} - \varepsilon\alpha_2 \sup_{\overline{\mathcal{U}}\setminus U}|\mathrm{d}\psi|\Bigr)\,\|u\|_{L^2(\overline{\mathcal{U}}\setminus U)}^2 
           + \int_{M\setminus\overline{\mathcal{U}}}\!\Bigl(\frac{m \,\scalM}{4(m-1)} + \varepsilon^2 \alpha_2 \Bigr)|u|^2\,dV.
\end{aligned}
\]

On \(U\) we have \(\scalM \ge m(m-1)\) by hypothesis.  
Since \(\Phi\) is area decreasing, its area contraction constant satisfies \(\mathbf{a}(p)\le 1\) everywhere. Consequently
\[
\scalM(p) \ge \mathbf{a}(p)\cdot m(m-1) \quad \text{for all } p\in U.
\]
Inserting the curvature estimate \eqref{eq:curvature_term_area_decreasing} yields
\[
\begin{aligned}
0 \geq \; & \frac{m}{4c(m-1)}\, \lambda_1(M,g_M)\,\|u\|_{L^2(M)}^2
          + \frac{m}{4(m-1)}\int_{U}\underbrace{\Bigl(\scalM - \mathbf{a}(p)\,m(m-1)\Bigr)}_{\ge 0}\,|u|^2\,dV \\
          & + \Bigl(\frac{m \,\scal_0}{4(m-1)} - \varepsilon\alpha_2 \sup_{\overline{\mathcal{U}}\setminus U}|\mathrm{d}\psi|\Bigr)\,\|u\|_{L^2(\overline{\mathcal{U}}\setminus U)}^2 
           + \Bigl(\frac{m}{4(m-1)}\inf_{M\setminus\overline{\mathcal{U}}}\scalM + \varepsilon^2 \alpha_2 \Bigr)\,\|u\|_{L^2(M\setminus\overline{\mathcal{U}})}^2.
\end{aligned}
\]

Choose \(\varepsilon>0\) so small that \(\frac{m\, \scal_0}{4(m-1)} > \varepsilon\alpha_2 \sup_{\overline{\mathcal{U}}\setminus U}|\mathrm{d}\psi|\).
The remaining argument now proceeds exactly as in the even‑dimensional case (see the discussion following \eqref{eq:spectral_Llarull_theorem_even_case}), leading to a contradiction.

This completes the proof of \Cref{mthm:quantitative_Llarull_theorem}.
\end{proof}

\begin{proof}[Proof of Corollary \ref{cor:non-approximation_Llarull}]
By \cref{mthm:quantitative_Llarull_theorem}, we have
\[
\inf_M \scal_{g} < -\frac{1}{c} \lambda_1(M,g),
\]
where $c>\frac{m-1}{4m}$.
Following \cite{Gro96} (also see~\cite{Dav03}), a basis of neighborhood of $g_M$ is given by the sets $\mathcal{V}_{ab}=\{g\colon a^2 g_M \leq g \leq b^2 g_M \}$,
where $0<a^2<1<b^2$. Then for all $g\in \mathcal{V}_{ab}$ and for all positive integers $k\in \mathbf{N}$, by min-max characterization of the spectrum of Laplacian, we have
\[
\frac{a^m}{b^{m+2}} \lambda_k(M,g_M) \leq \lambda_k(g) \leq \frac{b^m}{a^{m+2}} \lambda_k (M,g_M).
\]
Thus the function $F: g\mapsto \lambda_1(M,g)$ is continuous. 
Therefore, there exists a $C^0$-neighborhood $\mathcal{V}$ of $g_M$ such that for all metric $g\in \mathcal{V}$,
\[
  |\lambda_1(M,g) - \lambda_1(M,g_M)|<\varepsilon.
\]

Since $\lambda_1(M,g_M)\neq 0$, set $C(g_M)=\frac{1}{c}\big(\lambda_1(M,g_M)-\varepsilon\big)>0$ for $\varepsilon$ sufficiently small, then for all metric $g$ in $\mathcal{V}$,
\[
\inf_M \scal_{g}  <  -\frac{1}{c} \lambda_1(M,g)
<-\frac{1}{c} \big(\lambda_1(M,g_M)-\varepsilon\big) = -C(g_M).
\]
Hence we cannot approximate $g_M$ by a $C^0$-sequence of $C^2$-metrics $g$ such that $\scal_{g} \geq -C(g_M)$.
\end{proof}

Finally, we prove \cref{mthm:quantitative_Llarull_theorem_boundary}.

\begin{proof}[Proof of \cref{mthm:quantitative_Llarull_theorem_boundary}]
The proof follows the same strategy as in the proof of \Cref{mthm:quantitative_Llarull_theorem}, with the necessary adaptations to handle the boundary terms appearing in the spectral estimate.
Suppose, by contradiction, that 
\begin{equation}\label{eq:Llarull_assumption}
    \scalM \geq 0\quad \text{on}\quad M.
\end{equation}

We begin with the case where $m$ is odd. 
Set $K:=\suppdPhi$, $U:= \{ p \in M \colon \mathrm{d}_p \Phi \neq 0 \}$ and $U_\delta := \{ p \in \mathcal{U} \colon \mathbf{a}(p) < \delta \}$
for some constant $0 < \delta < 1$, where $\mathbf{a}(p)$ is the area contraction constant of $\Phi$ at $p$.
Let $\mathcal{W}$ be a compact submanifold containing $\partial M$ with smooth boundary, whose interior contains $K$. 
Let $\mathcal{U}$ be a small open neighborhood of $K$ in $\mathcal{W}^{\circ}$ such that $\scalM\geq \scal_0$ on $\mathcal{U}\setminus K$ for some constant $0<\scal_0 < m(m-1)$.

Using the same notations and arguments in the proof of \cref{mthm:quantitative_Llarull_theorem}, there exist a GL-pair $(\mathcal{E},\mathcal{F})$ over $M$ with support $K$ and a smooth map $\rho_M \in C^{\infty}(M,U(l))$, which is locally constant outside $K$, such that
\[
    \sfrel(M;\mathcal{E},\mathcal{F},\rho_M)=\deg(\Phi)\neq 0.
\]

Let $S$ be a relative Dirac bundle over $M$ associated to $(\mathcal{E},\mathcal{F})$ and let $\mathcal{D}$ be the corresponding Dirac operator on $S$ (cf.~\cite{CZ24}*{Example~2.5}).
Let \(\psi\colon M\to[0,1]\) be a smooth cut-off function with \(\psi=0\) on \(K\) and \(\psi=1\) outside \(\overline{\mathcal{U}}\).
As in \cite{Zh20}, for \(\varepsilon>0\), set \(f:=\varepsilon\psi\). Then \(f\) is admissible, with $f|_K=0$ and \(f|_{M\setminus\overline{\mathcal{U}}}=\varepsilon\). 

We consider a family of Callias operators $\mathcal{B}_{f,-1}(t)=(1-t)\mathcal{B}_{f,-1} + t \rho_M^{-1} \mathcal{B}_{f,-1} \rho_M$ on $S$ for $t\in [0,1]$, where $\mathcal{B}_{f,-1}=\mathcal{D}+f\sigma$ is the Callias operator on $S$ subject to the sign $s=-1$. 
Let $\sflow(\mathcal{B}_{f,-1},\rho_M)$ denote the spectral flow of $\{\mathcal{B}_{f,-1}(t)\}_{t\in [0,1]}$.
By the same reasoning used in the proof of \cref{mthm:quantitative_Llarull_theorem}, we see that 
\[
    \sflow(\mathcal{B}_{f,-1},\rho_M)= \sfrel(M;\mathcal{E},\mathcal{F},\rho_M) \neq 0.
\]
Thus there exist some $t\in [0,1]$ and a nonzero $u\in \ker(\mathcal{B}_{f,-1}(t))$. From \eqref{eq:spectral_flow_connection_spectral_estimate} and \eqref{eq:odd_curvature_endomorphism}, we have
\begin{equation}\label{eq:noncompact_Llarull_theorem_with_boundary}
\begin{aligned}
0 \geq & \dfrac{m}{4c(m-1)} \int_M|{\rm d} |u| |^2 dV + \frac{m}{m-1} \int_M \left( \frac{1}{4} \scalM |u|^2 + \langle u, \mathscr{R}^{\mathcal{E} \oplus \mathcal{F}}(t) u \rangle \right) dV \\
  &  + \int_M \langle u, \alpha_2 f^2 u + \alpha_2 c({\rm d}f) \sigma u \rangle dV + \int_{\partial M} \underbrace{ \Big (\alpha_2 f + \frac{m}{2} H \Big) }_{\geq 0, \ \text{by}\; \alpha_2>0, \; f, H\geq 0} |u|^2 dA,
\end{aligned}
\end{equation}
where $c>\frac{m-1}{4m}$.

Because \(\Phi\) is area decreasing, its area contraction constant satisfies \(\mathbf{a}(p)\le 1\) at every \(p\in M\).
Thus $\scalM(p) \ge \mathbf{a}(p)\cdot m(m-1)$ for all $p\in U$.
Recall that $\supp({\rm d}\psi)\subset \overline{\mathcal{U}}\setminus U$. Observe that $\mathscr{R}^{\mathcal{E}\oplus\mathcal{F}}(t)=0$ on $ M\setminus U$.
Using \eqref{eq:curvature_term_area_decreasing} and \eqref{eq:noncompact_Llarull_theorem_with_boundary}, we deduce that
\begin{equation}
 \begin{aligned}
     0 \geq &  \dfrac{m}{4c(m-1)} \int_M|{\rm d} |u| |^2 dV 
              + \frac{m}{4(m-1)} \int_U \underbrace{ \Big(  \scalM - \mathbf{a}(p) \cdot m(m-1) \Big) }_{\geq 0} |u|^2 dV \\
            & + \int_{M\setminus U}  \Big(\frac{m\, \scalM}{4(m-1)}  + \alpha_2 f^2 - \alpha_2 |{\rm d}f| \Big) |u|^2 dV.
 \end{aligned}
\end{equation}

Note that \(\scalM - \mathbf{a}(p) \cdot m(m-1) \ge (1-\delta)m(m-1) > 0\) on \(U_\delta \subset U\). Therefore,
\begin{equation}\label{eq:Llarull_compact_boundary_spectral_estimate}
 \begin{aligned}
     0 \geq  & \dfrac{m}{4c(m-1)} \|{\rm d} |u| \|_{L^2(M)}^2+ \frac{1}{4} (1-\delta)m^2 \|u\|^2_{L^2(U_{\delta})} \\
            & +  \Big(\frac{m\,\scal_0}{4(m-1)} - \varepsilon \alpha_2 \sup_{\overline{\mathcal{U}} \setminus U} |{\rm d}\psi | \Big) \|u\|^2_{L^2(\overline{\mathcal{U}} \setminus U)} 
             + \underbrace{\Bigl(\frac{m}{4(m-1)}\inf_{M\setminus\overline{\mathcal{U}}}\scalM + \varepsilon^2 \alpha_2 \Bigr)}_{>0, \ \text{by} \  \eqref{eq:Llarull_assumption} } \, \|u\|_{L^2(M\setminus\overline{\mathcal{U}})}^2. 
 \end{aligned}
\end{equation}
Here, $\varepsilon>0$ can be chosen sufficiently small such that $\frac{m\,\scal_0}{4(m-1)} > \varepsilon\alpha_2  \sup_{\overline{\mathcal{U}} \setminus U} |{\rm d}\psi|$.
Note that the four terms on the right-hand side of \eqref{eq:Llarull_compact_boundary_spectral_estimate} are nonnegative. This forces $u=0$ on $U_{\delta}\cup(M\setminus U)$.
Because $M$ is connected, $|u|$ is constant. Therefore, $u=0$ on all of $M$.
This is a contradiction. We have proved this theorem for $m$ odd.

When $m$ is even, we consider the index of a single Callias operator, and the arguments are analogous to those above.
Hence the proof of \cref{mthm:quantitative_Llarull_theorem_boundary} is complete. 
\end{proof} 


\textbf{Acknowledgements.} 
The author is deeply grateful to Professor Weiping Zhang for his insightful discussions and helpful suggestions.
The author thanks Professor Guangxiang Su for his useful discussions and valuable comments. The author also thanks Professor Zhenlei Zhang and Professor Bo Liu for their continuous encouragement and support.
This work is partially supported by
the National Natural Science Foundation of China Grant No. 12501064, 
the China Postdoctoral Science Foundation (Grant No. 2025M773075, Postdoctoral Fellowship Program Grant No. GZC20252016 and Tianjin Joint Support Program Grant No. 2025T002TJ)
and the Nankai Zhide Foundation.


\begin{bibdiv}
\begin{biblist}

\bib{Bar96}{article}{
   author={B\"ar, C.},
   title={Metrics with harmonic spinors},
   journal={Geom. Funct. Anal.},
   volume={6},
   date={1996},
   number={6},
   pages={899--942},
   issn={1016-443X},
   review={\MR{1421872}},
   url={https://doi.org/10.1007/BF02246994},
}

\bib{BZ25+}{article}{
      author={B\"ar, C.},
      author={Ziemke, R.},
       title={Spectral flow and the Atiyah-Patodi-Singer index theorem},
        date={2025},
      eprint={https://arxiv.org/abs/2512.04968},
         url={https://arxiv.org/abs/2512.04968},
}

\bib{BMJ05}{article}{
   author={Booss-Bavnbek, Bernhelm},
   author={Lesch, Matthias},
   author={Phillips, John},
   title={Unbounded Fredholm operators and spectral flow},
   journal={Canad. J. Math.},
   volume={57},
   date={2005},
   number={2},
   pages={225--250},
   issn={0008-414X},
   review={\MR{2124916}},
   url={https://doi.org/10.4153/CJM-2005-010-1},
}

\bib{BW93}{book}{
   author={Boo\ss-Bavnbek, Bernhelm},
   author={Wojciechowski, Krzysztof P.},
   title={Elliptic boundary problems for Dirac operators},
   series={Mathematics: Theory \& Applications},
   publisher={Birkh\"auser Boston, Inc., Boston, MA},
   date={1993},
   pages={xviii+307},
   isbn={0-8176-3681-1},
   review={\MR{1233386}},
   url={https://doi.org/10.1007/978-1-4612-0337-7},
}

\bib{BHMMM15}{book}{
   author={Bourguignon, Jean-Pierre},
   author={Hijazi, Oussama},
   author={Milhorat, Jean-Louis},
   author={Moroianu, Andrei},
   author={Moroianu, Sergiu},
   title={A spinorial approach to Riemannian and conformal geometry},
   series={EMS Monographs in Mathematics},
   publisher={European Mathematical Society (EMS), Z\"urich},
   date={2015},
   pages={ix+452},
   isbn={978-3-03719-136-1},
   review={\MR{3410545}},
   url={https://doi.org/10.4171/136},
}

\bib{CGH00}{article}{
   author={Calderbank, David M. J.},
   author={Gauduchon, Paul},
   author={Herzlich, Marc},
   title={Refined Kato inequalities and conformal weights in Riemannian
   geometry},
   journal={J. Funct. Anal.},
   volume={173},
   date={2000},
   number={1},
   pages={214--255},
   issn={0022-1236},
   review={\MR{1760284}},
   url={https://doi.org/10.1006/jfan.2000.3563},
}

\bib{CWXZ24+}{article}{
      author={Cecchini, S.},
      author={Wang, J.},
      author={Xie, Z.},
      author={Zhu, B.},
       title={Scalar curvature rigidity of the four-dimensional sphere},
        date={2024},
        journal={Math. Ann., to appear},
      eprint={https://arxiv.org/abs/2402.12633},
         url={https://arxiv.org/abs/2402.12633},
}

\bib{CZ24}{article}{
      author={Cecchini, S.},
      author={Zeidler, R.},
       title={Scalar and mean curvature comparison via the {D}irac operator},
        date={2024},
        ISSN={1465-3060},
     journal={Geom. Topol.},
      volume={28},
      number={3},
       pages={1167\ndash 1212},
         url={https://doi.org/10.2140/gt.2024.28.1167},
      review={\MR{4746412}},
}

\bib{Dav03}{article}{
   author={Davaux, H\'el\`ene},
   title={An optimal inequality between scalar curvature and spectrum of the
   Laplacian},
   journal={Math. Ann.},
   volume={327},
   date={2003},
   number={2},
   pages={271--292},
   issn={0025-5831},
   review={\MR{2015070}},
   url={https://doi.org/10.1007/s00208-003-0451-8},
}

\bib{Ge93}{article}{
   author={Getzler, Ezra},
   title={The odd Chern character in cyclic homology and spectral flow},
   journal={Topology},
   volume={32},
   date={1993},
   number={3},
   pages={489--507},
   issn={0040-9383},
   review={\MR{1231957}},
   url={https://doi.org/10.1016/0040-9383(93)90002-D},
}

\bib{GS02}{article}{
   author={Goette, S.},
   author={Semmelmann, U.},
   title={Scalar curvature estimates for compact symmetric spaces},
   journal={Differential Geom. Appl.},
   volume={16},
   date={2002},
   number={1},
   pages={65--78},
   issn={0926-2245},
   review={\MR{1877585}},
   url={https://doi.org/10.1016/S0926-2245(01)00068-7},
}

\bib{Gro96}{article}{
   author={Gromov, M.},
   title={Positive curvature, macroscopic dimension, spectral gaps and
   higher signatures},
   conference={
      title={Functional analysis on the eve of the 21st century, Vol.\ II},
      address={New Brunswick, NJ},
      date={1993},
   },
   book={
      series={Progr. Math.},
      volume={132},
      publisher={Birkh\"auser Boston, Boston, MA},
   },
   isbn={0-8176-3855-5},
   date={1996},
   pages={1--213},
   review={\MR{1389019}},
   url={https://doi.org/10.1007/s10107-010-0354-x},
}

\bib{Gro23}{article}{
   author={Gromov, M.},
   title={Four lectures on scalar curvature},
   conference={
      title={Perspectives in scalar curvature. Vol. 1},
   },
   book={
      publisher={World Sci. Publ., Hackensack, NJ},
   },
   isbn={978-981-124-998-3},
   isbn={978-981-124-935-8},
   isbn={978-981-124-936-5},
   date={[2023] \copyright 2023},
   pages={1--514},
   review={\MR{4577903}},
}

\bib{GL83}{article}{
   author={Gromov, Mikhael},
   author={Lawson, H. Blaine, Jr.},
   title={Positive scalar curvature and the Dirac operator on complete
   Riemannian manifolds},
   journal={Inst. Hautes \'Etudes Sci. Publ. Math.},
   number={58},
   date={1983},
   pages={83--196 (1984)},
   issn={0073-8301},
   review={\MR{0720933}},
}

\bib{HKKZ24}{article}{
   author={Hirsch, Sven},
   author={Kazaras, Demetre},
   author={Khuri, Marcus},
   author={Zhang, Yiyue},
   title={Spectral torical band inequalities and generalizations of the
   Schoen-Yau black hole existence theorem},
   journal={Int. Math. Res. Not. IMRN},
   date={2024},
   number={4},
   pages={3139--3175},
   issn={1073-7928},
   review={\MR{4707281}},
   url={https://doi.org/10.1093/imrn/rnad129},
}

\bib{LM89}{book}{
   author={Lawson, H. Blaine, Jr.},
   author={Michelsohn, Marie-Louise},
   title={Spin geometry},
   series={Princeton Mathematical Series},
   volume={38},
   publisher={Princeton University Press, Princeton, NJ},
   date={1989},
   pages={xii+427},
   isbn={0-691-08542-0},
   review={\MR{1031992}},
}

\bib{Lesch05}{article}{
   author={Lesch, Matthias},
   title={The uniqueness of the spectral flow on spaces of unbounded
   self-adjoint Fredholm operators},
   conference={
      title={Spectral geometry of manifolds with boundary and decomposition
      of manifolds},
   },
   book={
      series={Contemp. Math.},
      volume={366},
      publisher={Amer. Math. Soc., Providence, RI},
   },
   isbn={0-8218-3536-X},
   date={2005},
   pages={193--224},
   review={\MR{2114489}},
   url={https://doi.org/10.1090/conm/366/06730},
}

\bib{LSW24}{article}{
   author={Li, Yihan},
   author={Su, Guangxiang},
   author={Wang, Xiangsheng},
   title={Spectral flow, Llarull's rigidity theorem in odd dimensions and
   its generalization},
   journal={Sci. China Math.},
   volume={67},
   date={2024},
   number={5},
   pages={1103--1114},
   issn={1674-7283},
   review={\MR{4739559}},
   url={https://doi.org/10.1007/s11425-023-2138-5},
}

\bib{LSWZ24+}{article}{
    author={Li, Yihan},
    author={Su, Guangxiang},
    author={Wang, Xiangsheng},
    author={Zhang, Weiping},
    title={Llarull's theorem on odd dimensional manifolds: the noncompact
    case},
    date={2024},
    eprint={https://arxiv.org/abs/2404.18153},
    url={https://arxiv.org/abs/2404.18153},
}

\bib{Lic63}{article}{
   author={Lichnerowicz, Andr\'e},
   title={Spineurs harmoniques},
   language={French},
   journal={C. R. Acad. Sci. Paris},
   volume={257},
   date={1963},
   pages={7--9},
   issn={0001-4036},
   review={\MR{0156292}},
}

\bib{LL26+}{article}{
    author={Liu, Bo},
    author={Liu, Daoqiang},
    title={Llarull's theorem on noncompact manifolds with boundary},
    journal={Preprint},
    date={2026},
    eprint={https://arxiv.org/abs/2601.12803},
    url={https://arxiv.org/abs/2601.12803},
}

\bib{Ll98}{article}{
   author={Llarull, Marcelo},
   title={Sharp estimates and the Dirac operator},
   journal={Math. Ann.},
   volume={310},
   date={1998},
   number={1},
   pages={55--71},
   issn={0025-5831},
   review={\MR{1600027}},
   url={https://doi.org/10.1007/s002080050136},
}

\bib{Lot21}{article}{
   author={Lott, John},
   title={Index theory for scalar curvature on manifolds with boundary},
   journal={Proc. Amer. Math. Soc.},
   volume={149},
   date={2021},
   number={10},
   pages={4451--4459},
   issn={0002-9939},
   review={\MR{4305995}},
   url={https://doi.org/10.1090/proc/15551},
}

\bib{MWang24}{article}{
   author={Munteanu, Ovidiu},
   author={Wang, Jiaping},
   title={Bottom spectrum of three-dimensional manifolds with scalar
   curvature lower bound},
   journal={J. Funct. Anal.},
   volume={287},
   date={2024},
   number={2},
   pages={Paper No. 110457, 41},
   issn={0022-1236},
   review={\MR{4736650}},
   url={https://doi.org/10.1016/j.jfa.2024.110457},
}

\bib{Rad94}{article}{
   author={R\aa de, Johan},
   title={Callias' index theorem, elliptic boundary conditions, and cutting
   and gluing},
   journal={Comm. Math. Phys.},
   volume={161},
   date={1994},
   number={1},
   pages={51--61},
   issn={0010-3616},
   review={\MR{1266069}},
}

\bib{Shi24+}{article}{
   author={Shi, Pengshuai},
   title={The odd-dimensional long neck problem via spectral flow},
   journal={Int. Math. Res. Not. IMRN},
   date={2025},
   number={17},
   pages={Paper No. rnaf262, 19},
   issn={1073-7928},
   review={\MR{4951381}},
   url={https://doi.org/10.1093/imrn/rnaf262},
}

\bib{Shi25+}{article}{
   author={Shi, Pengshuai},
   title={Spectral flow of Callias operators, odd K-cowaist, and positive
   scalar curvature},
   journal={Adv. Math.},
   volume={479},
   date={2025},
   pages={Paper No. 110429, 41},
   issn={0001-8708},
   review={\MR{4929482}},
   url={https://doi.org/10.1016/j.aim.2025.110429},
}

\bib{Su19}{article}{
   author={Su, Guangxiang},
   title={Lower bounds of Lipschitz constants on foliations},
   journal={Math. Z.},
   volume={293},
   date={2019},
   number={1-2},
   pages={417--423},
   issn={0025-5874},
   review={\MR{4002283}},
   url={https://doi.org/10.1007/s00209-018-2186-y},
}

\bib{SWZ22}{article}{
   author={Su, Guangxiang},
   author={Wang, Xiangsheng},
   author={Zhang, Weiping},
   title={Nonnegative scalar curvature and area decreasing maps on complete
   foliated manifolds},
   journal={J. Reine Angew. Math.},
   volume={790},
   date={2022},
   pages={85--113},
   issn={0075-4102},
   review={\MR{4472869}},
   url={https://doi.org/10.1515/crelle-2022-0038},
}

\bib{Taylor11}{book}{
   author={Taylor, Michael E.},
   title={Partial differential equations I. Basic theory},
   series={Applied Mathematical Sciences},
   volume={115},
   edition={2},
   publisher={Springer, New York},
   date={2011},
   pages={xxii+654},
   isbn={978-1-4419-7054-1},
   review={\MR{2744150}},
   doi={10.1007/978-1-4419-7055-8},
}

\bib{Zei20}{article}{
   author={Zeidler, Rudolf},
   title={Width, largeness and index theory},
   journal={SIGMA Symmetry Integrability Geom. Methods Appl.},
   volume={16},
   date={2020},
   pages={Paper No. 127, 15},
   review={\MR{4181525}},
   url={https://doi.org/10.3842/SIGMA.2020.127},
}

\bib{Zh01}{book}{
   author={Zhang, Weiping},
   title={Lectures on Chern-Weil theory and Witten deformations},
   series={Nankai Tracts in Mathematics},
   volume={4},
   publisher={World Scientific Publishing Co., Inc., River Edge, NJ},
   date={2001},
   pages={xii+117},
   isbn={981-02-4686-2},
   review={\MR{1864735}},
   url={https://doi.org/10.1142/9789812386588},
}

\bib{Zh20}{article}{
   author={Zhang, Weiping},
   title={Nonnegative scalar curvature and area decreasing maps},
   journal={SIGMA Symmetry Integrability Geom. Methods Appl.},
   volume={16},
   date={2020},
   pages={Paper No. 033, 7},
   review={\MR{4089513}},
   url={https://doi.org/10.3842/SIGMA.2020.033},
}

\end{biblist}
\end{bibdiv}

\end{document}